\newtheorem{theorem}{Theorem}[section]
\newtheorem{lemma}[theorem]{Lemma}
\newtheorem{proposition}[theorem]{Proposition}
\newtheorem{remark}[theorem]{Remark}
\newtheorem{example}[theorem]{Example}
\newtheorem{notation}[theorem]{Notation}
\numberwithin{equation}{section}
\numberwithin{figure}{section}
\newcommand{\norm}[1]{\left\|#1\right\|}
\def\intave#1{\int_{#1}\hbox{\llap{$\raise2.3pt\hbox{\vrule
height.9pt width7pt}\phantom{\scriptstyle{#1}}\mkern-2mu$}}}
\title{Periodic Points of Diagonal and Permutation Operators}
\author{Howen Chuah}
\date{November 2024}
\begin{document}

\address{Howen Chuah,
Department of Mathematics, Purdue University,
 %150 N. University Street,
 West Lafayette, IN 47907-2067, USA}
\email{hchuah@purdue.edu}

\keywords{Periodic Points, Diagonal Operators, Permutation Operators, Hilbert Spaces, Spectrum}
\subjclass[2020]{Primary: 46C05, 47A10, 47B02, 37B20, Secondary: 37C25}

\begin{abstract}
We first give a condition for a normal operator on a Hilbert space to have no nonzero periodic points, then we give a characterization of normal operators with the whole space as periodic points. We proceed to study the structure of periodic points of the diagonal operators and the permutation operators with examples. Moreover, it is also shown that the set of all diagonal operators with the whole space as periodic points is dense in the set of all unitary diagonal operators.
\end{abstract}

\maketitle

\section{Introduction}

The concept of periodic points and periods is of fundamental importance in the theory of dynamical systems. Recall that by a dynamical system we mean an ordered pair $(X,f),$ where $X$ is a metric space and $f$ is a continuous function which maps $X$ into itself. A point $p \in X$ is said to be a periodic point of $f$ if there exists a positive integer $m$ such that $f^m(p) = p,$ in which case we define the period (or the prime period) of $p$ to be the least positive integer $m$ satisfying $f^m(p) = p.$
%We denote by
%\[ \begin{array}{l} P(f): \mbox{the set of all periodic points of $f$},\\Per(f): \mbox{the set of all periods of $f$}.\end{array}\]
 The general question is: Given a dynamical system $(X,f)$, can we characterize the periodic points of $f$
  and their periods? If so, can we say something interesting about their topological properties?

There have been plenty of literature answering the above question in different settings. For instance, \cite{{The_Set_of_Periods_of_Periodic_Points_of_a_Linear_Operator}} studied the case where $X$ is either $\mathbb{R}^n$ or $\mathbb{C}^n$ and $f$ is a linear operator, and it gave a characterization of the set of all periods of $f.$ In addition, \cite{{Periodic_Points_on_Hilbert_Space}} considered the case where $X$ is a Hilbert space and $f$ is a bounded linear operator. It showed that the periodic points are exactly countable unions of closed subspaces. Moreover, \cite{{Robert_Devaney}} studied continuous self maps on the real line, and \cite{{I_N_Baker}} studied polynomials on the complex plane.

In this paper, we continue to study the case $(H,T)$, where $H$ is a Hilbert space and $T$ is a bounded linear operator.
We study $T \in \mathcal{B}(H)$ and $P(T)$, where
\begin{equation}
\begin{array}{l}
\mathcal{B}(H): \mbox{$C^*$-algebra of all bounded linear operators on the Hilbert space $H$},\\
P(T): \mbox{the space of all periodic points of $T$.}
\end{array}
\label{dua}
\end{equation}
Clearly $P(T)$ is a $T$-invariant subspace of $H$, but it may not be closed in $H$.

For the rest of the paper, we shall always assume that the complex Hilbert space $H$ is infinite-dimensional and separable with countable orthonormal basis $\{e_n\}_{n \in \mathbb{N}}$, unless the contrary is stated.
Let $T \in \mathcal{B}(H)$. Recall that its spectrum is
\begin{equation}
 \sigma(T) := \{\lambda \in \mathbb{C}: T-\lambda I \text{ is not invertible in } \mathcal{B}(H)\}.
 \label{spe}
 \end{equation}
Let $G$ denote the roots of unity,
\begin{equation}
 G := \{e^{2\pi i q}: q \in \mathbb{Q}\} \subset \mathbb{C} .
 \label{rou}
 \end{equation}

Our first main result below shows that in order to study the periodic points of a normal operator $T$ (i.e. $T T^* = T^* T$), it suffices to consider $\sigma(T) \cap G$. Recall that by a well known result of Gelfand, for any $T \in \mathcal{B}(H)$,
we have $\sigma(T)$ is nonempty and compact. (See for instance \cite{{John_Conway}} Theorem 3.6 in p.196, or \cite{Gerard_J_Murphy} Theorem 1.2.5 in p.9.)

\begin{theorem}
\label{Condition_for_a_Normal_Operator_to_have_no_nonzero_periodic_points}
If $T \in \mathcal{B}(H)$ is normal and $\sigma(T) \cap G = \emptyset$, then $P(T) = \{0\}$.
Conversely, there exists a normal operator $S \in \mathcal{B}(H)$ such that $P(S) = \{0\}$ but $\sigma(S) \cap G \neq \emptyset$.
\end{theorem}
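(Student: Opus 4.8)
The plan is to treat the two assertions separately: the first implication follows immediately from the polynomial spectral mapping theorem, and the converse is witnessed by an explicit diagonal operator.

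For the forward implication, suppose $T$ is normal with $\sigma(T)\cap G=\emptyset$, and let $x\in P(T)$, say $T^m x=x$ for some $m\in\mathbb{N}$. Then $(T^m-I)x=0$, so $T^m-I$ is not injective and hence $1\in\sigma(T^m)$. By the spectral mapping theorem applied to the polynomial $p(z)=z^m$ one has $\sigma(T^m)=\{\mu^m:\mu\in\sigma(T)\}$, so there is $\mu\in\sigma(T)$ with $\mu^m=1$. Any such $\mu$ is an $m$-th root of unity, i.e. $\mu=e^{2\pi i k/m}\in G$ for some integer $k$, contradicting $\sigma(T)\cap G=\emptyset$ unless $x=0$. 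Hence $P(T)=\{0\}$. Alternatively, one can argue through the continuous functional calculus for the normal operator $T$: the function $z\mapsto z^m-1$ is nowhere vanishing on the compact set $\sigma(T)$, hence $T^m-I$ is invertible for every $m\geq1$.

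For the converse, I would take $S$ to be the diagonal operator $Se_n=\lambda_n e_n$ with $\lambda_n=1-\tfrac1n$ (any bounded sequence in $\mathbb{C}\setminus G$ that accumulates at a point of $G$ works equally well; for instance $\lambda_n=e^{i/n}$ also suffices, since $e^{i/n}\notin G$ because $\pi$ is irrational). Such an $S$ is bounded and normal, and its spectrum is the closure $\overline{\{\lambda_n:n\in\mathbb{N}\}}$, which contains $1\in G$; thus $\sigma(S)\cap G\neq\emptyset$. On the other hand, if $x=\sum_n x_n e_n$ satisfies $S^m x=x$ for some $m\geq1$, then $\lambda_n^m x_n=x_n$ for every $n$, forcing $x_n=0$ whenever $\lambda_n^m\neq1$; since no $\lambda_n$ is a root of unity we have $\lambda_n^m\neq1$ for all $n$ and all $m\geq1$, whence $x=0$. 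Therefore $P(S)=\{0\}$.

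The whole content lies in the converse, and the only point requiring care is the (elementary) verification that $P(S)=\{0\}$, which I would carry out via the eigenvalue description of periodic points of a diagonal operator; I do not expect a serious obstacle. It is worth stressing that infinite-dimensionality is essential: in finite dimensions $\sigma(S)\cap G\neq\emptyset$ would produce an actual eigenvalue in $G$ and hence a nonzero periodic eigenvector, so the example crucially exploits that the spectrum of $S$ contains a limit of eigenvalues which is not itself an eigenvalue.
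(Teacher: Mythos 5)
Your proposal is correct and follows essentially the same route as the paper: the forward implication via the polynomial spectral mapping theorem applied to $z\mapsto z^m$, and the converse via a bounded normal diagonal operator whose eigenvalues all lie outside $G$ but accumulate at a point of $G$, so that the spectrum (the closure of the eigenvalue set) meets $G$ while no nonzero vector can be periodic. The paper's witness uses the unitary diagonal operator with eigenvalues $e^{2\pi i(q_n+\sqrt{2})}$ (giving $\sigma(S)=S^1$), whereas your self-adjoint example with $\lambda_n=1-\tfrac1n$ is a simpler instance of the identical mechanism.
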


The next theorem studies normal operators whose periodic points are the whole space.
%Our second main theorem below gives the structure of such operators.
We show that every such operator must be diagonalizable and unitary.

\begin{theorem}
\label{structure_of_normal_operators_with_whole_space_as_periodic_points}
    Let $T \in \mathcal{B}(H)$ be a normal operator with $P(T) = H.$ Then there exists an orthonormal basis $\{u_n\}_{n \in \mathbb{N}}$ %consisting entirely of eigenvectors of $T$
    such that $T(u_n) = c_n u_n$ for all $n \in \mathbb{N}$, and
    $\sigma(T) = \{c_n: n \in \mathbb{N}\} \subset G$ is finite.
    %$\sup\{\text{order}(c_n): n \in \mathbb{N}\} < \infty$.
     Thus $T$ is unitary.
     %In addition, the spectrum of $T$ is $and is a finite discrete subset of the unit circle $S^1.$
    Moreover, if $N$ is the least common multiple of $\{\text{order}(c_n): n \in \mathbb{N}\},$ then $T^N = I.$
\end{theorem}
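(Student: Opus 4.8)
The plan is to reduce the statement to the spectral picture already available from Theorem~\ref{Condition_for_a_Normal_Operator_to_have_no_nonzero_periodic_points}'s proof technique, namely to exploit the spectral theorem for the normal operator $T$ together with the hypothesis $P(T) = H$. First I would recall that since $T$ is normal, by the spectral theorem there is a projection-valued measure $E$ on the Borel subsets of $\sigma(T)$ with $T = \int_{\sigma(T)} \lambda \, dE(\lambda)$. The key observation is that the hypothesis $P(T) = H$ forces $\sigma(T)$ to consist only of roots of unity: if some $\lambda_0 \in \sigma(T)$ were \emph{not} a root of unity, then for a small closed disc $B$ around $\lambda_0$ that avoids $G$ (possible since $G$ is countable, hence has empty interior), the spectral subspace $E(B)H$ would be nonzero (as $\lambda_0$ lies in the support of $E$), $T$-reducing, and the restriction $T|_{E(B)H}$ would be normal with spectrum contained in $B$, hence disjoint from $G$; Theorem~\ref{Condition_for_a_Normal_Operator_to_have_no_nonzero_periodic_points} would then give $P(T|_{E(B)H}) = \{0\}$, contradicting $P(T) = H$ (which forces $P(T|_{M}) = M$ for every reducing subspace $M$). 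Hence $\sigma(T) \subset G$.

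Next I would show $\sigma(T)$ is finite. Suppose not; then $\sigma(T) \subset G$ is an infinite compact set, so it has an accumulation point $\mu \in \sigma(T) \subset G$. Pick points $\lambda_k \in \sigma(T)$, $\lambda_k \to \mu$, $\lambda_k \neq \mu$, with pairwise disjoint small closed neighborhoods $B_k$, each $E(B_k)H \neq 0$. Choose a unit vector $x_k \in E(B_k)H$; then $x = \sum_k 2^{-k} x_k$ is a well-defined vector in $H$, and since the $x_k$ live in mutually orthogonal reducing subspaces, $T^m x = \sum_k 2^{-k} T^m x_k$. For $x$ to be periodic we would need $T^m x = x$ for a single $m$, i.e. $T^m x_k = x_k$ for all $k$ simultaneously; but $x_k \in E(B_k)H$ with $B_k$ shrinking toward $\mu$ means the "local" behavior of $T$ on $x_k$ is multiplication by numbers near $\mu$ of increasing order (choosing the $B_k$ to contain roots of unity of unbounded order, which is possible since $G$ is dense near any point of $G$), so no fixed $m$ works. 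This contradicts $P(T) = H$. Therefore $\sigma(T) = \{c_1, \dots, c_r\}$ is finite and contained in $G$.

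With $\sigma(T)$ finite, the spectral measure is purely atomic: $H = \bigoplus_{j=1}^r E(\{c_j\})H$ and $T$ acts as multiplication by $c_j$ on $E(\{c_j\})H$. Choosing an orthonormal basis of each eigenspace $E(\{c_j\})H$ and concatenating gives an orthonormal basis $\{u_n\}_{n\in\mathbb{N}}$ of $H$ with $T u_n = c_n u_n$ where each $c_n \in \{c_1,\dots,c_r\} \subset G$. Since each $c_n$ is a root of unity, $|c_n| = 1$, so $T$ maps an orthonormal basis to an orthonormal basis and is therefore unitary. Finally, if $N = \operatorname{lcm}\{\operatorname{order}(c_n) : n\in\mathbb{N}\}$ — a finite number because there are only finitely many distinct values $c_j$ — then $c_n^N = 1$ for every $n$, so $T^N u_n = c_n^N u_n = u_n$ for all $n$, and since $\{u_n\}$ spans a dense subspace and $T^N$ is bounded, $T^N = I$.

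The main obstacle I anticipate is the finiteness argument in the second paragraph: one must be careful that the vector $x = \sum_k 2^{-k} x_k$ genuinely fails to be periodic, which requires choosing the neighborhoods $B_k$ so that the roots of unity available inside them (guaranteed by $\sigma(T) \subset G$ together with a clustering hypothesis) have orders that are not all divisors of any single integer. If it turns out that $\sigma(T)$ could cluster only at points without nearby high-order roots of unity in $\sigma(T)$ itself, one instead argues directly: an infinite subset of $G$ must contain elements of unbounded order (since for each fixed $N$ there are only finitely many elements of $G$ of order dividing $N$), pick $c_{n_k}\in\sigma(T)$ with $\operatorname{order}(c_{n_k})\to\infty$, take unit eigenvectors $v_k$ for $c_{n_k}$, and set $x = \sum_k 2^{-k} v_k$; then $T^m x = x$ would force $c_{n_k}^m = 1$ for all $k$, impossible for fixed $m$. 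This cleaner variant sidesteps the clustering subtlety entirely and is the route I would actually take.
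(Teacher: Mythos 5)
There are two genuine gaps in your argument, both occurring where you pass from spectral data to actual vectors. First, in the step establishing $\sigma(T)\subset G$: you claim that around a point $\lambda_0\in\sigma(T)\setminus G$ one can choose a small closed disc $B$ avoiding $G$, ``since $G$ is countable, hence has empty interior.'' Empty interior does not produce such a disc: $G$ is \emph{dense} in the unit circle, so in the only nontrivial case $|\lambda_0|=1$ (if $|\lambda_0|\neq 1$ a disc avoiding the whole circle works) \emph{every} disc around $\lambda_0$ meets $G$, and your appeal to Theorem \ref{Condition_for_a_Normal_Operator_to_have_no_nonzero_periodic_points} on $E(B)H$ collapses. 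This case is repairable but needs a different idea, e.g.: since every $x\in E(B)H$ is periodic, $E(B)H=\bigcup_{m}E(B\cap\{z:z^m=1\})H$ is a countable union of closed subspaces, so by Baire category one of them equals $E(B)H$; as $\{z:z^m=1\}$ is finite and $\lambda_0\notin G$, a smaller neighborhood of $\lambda_0$ then has zero spectral measure, contradicting $\lambda_0\in\sigma(T)$. Second, in your ``cleaner variant'' for the finiteness of $\sigma(T)$ you ``take unit eigenvectors $v_k$ for $c_{n_k}$.'' Points of the spectrum of a normal operator need not be eigenvalues --- they are only approximate eigenvalues; for instance a diagonal operator whose diagonal entries are roots of unity accumulating at a root of unity $\mu$ has $\mu\in\sigma(T)\cap G$ without $\mu$ being an eigenvalue. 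Isolated points of $\sigma(T)$ are eigenvalues, but isolation is precisely what you are trying to establish. The fix is again to use spectral subspaces: take disjoint discs $B_k$ around the $c_{n_k}$, unit vectors $x_k\in E(B_k)H$, and note that $T^mx=x$ for $x=\sum_k 2^{-k}x_k$ forces $x_k\in E(B_k\cap\{z:z^m=1\})H$, which vanishes for all but at most $m$ indices $k$ because the $B_k$ are disjoint and $\{z:z^m=1\}$ has only $m$ elements. Your middle paragraph gestures at this but never pins down why ``no fixed $m$ works.''

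For comparison, the paper sidesteps the spectral measure entirely: it quotes Theorem 3.3 of the cited Chiranjeevi--Kannan--Gopal paper, by which closedness of $P(T)=H$ forces the set of periods to be finite, giving $T^M=I$ outright; the continuous functional calculus then shows $\sigma(T)\subset\{z:z^M=1\}$ is finite, and diagonalizability follows from elementary facts about the (closed, mutually orthogonal) eigenspaces. Your final two steps --- atomicity of the spectral measure once $\sigma(T)$ is finite, and the deduction of unitarity and $T^N=I$ --- are correct as written.
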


Now take $H_i := \mathbb{C}e_i$ for all $i \in \mathbb{N},$ (where $\{e_i\}_{i \in \mathbb{N}}$ is an orthonormal basis of $H$) so that the Hilbert space $H$ can be decomposed into the orthogonal direct sum of 1-dimensional subspaces as $H = \Sigma_{i \in \mathbb{N}} H_i.$ Suppose that $S \in \mathcal{B}(H)$ is a bounded linear operator that preserves the $H_i$s. (i.e. There is a bijection $\tau : \mathbb{N} \rightarrow \mathbb{N}$ with $S(H_i) = S(H_{\tau(i)})$ for all $i \in \mathbb{N}.$) Then the operator $S$ can be written as $S = S_1 S_2,$ where $S_1 \in \mathcal{B}(H)$ is a diagonal operator and $S_2 \in \mathcal{B}(H)$ is a permutation operator. This motivates us to study the structure of the periodic points of the two classes of normal operators, namely the diagonal ones and the permutation ones.
We fix an orthonormal basis $\{e_n\}_{n \in \mathbb{N}}$ of $H$.
In this way, we say that $T \in \mathcal{B}(H)$ is diagonal if there exists a bounded sequence $\{\alpha_n\}_{{n \in \mathbb{N}}} \subset \mathbb{C}$ such that $T(e_n) = \alpha_n e_n$ for all $n \in \mathbb{N}.$ It is easy to see that in this case, its operator norm is $\norm{T} = \sup_{n \in \mathbb{N}}|\alpha_n|.$ Moreover, the adjoint $T^*$ is also diagonal, and $T^*(e_n) = \bar \alpha_n e_n$ for all $n \in \mathbb{N}.$ Hence, $T$ is a normal operator.

Our third main theorem gives a characterization of the subspace of all periodic points of a diagonal operator. Recall from (\ref{rou})
that $G$ denotes the roots of unity.
%that a complex number $z \in \mathbb{C}$ is said to be a root of unity if $z^m = 1$ for some $m \in \mathbb{N}.$

\begin{theorem}
\label{Periodic_Points_of_a_Diagonal_Operator}
Let $T \in \mathcal{B}(H)$ be the diagonal operator given by $T(e_n) = \alpha_n e_n$ for all $n \in \mathbb{N}$,
where $\{\alpha_n\}_{n \in \mathbb{N}} \subset \mathbb{C}$ is bounded.
Then:
 \\(1) $P(T) = P(T^*) = \cup_{k \in \mathbb{N}} \{\sum_{n = 1}^{\infty} c_n e_n \in H: \{c_n\}_{n \in \mathbb{N}} \in l^2(\mathbb{N}), \alpha_n^k = 1 \text{ for all } c_n \neq 0\}$.
 % and $P(T) = P(T^*).$
 \\(2) $P(T)$ is closed in $H$ if and only if $\{\alpha_n\}_{n \in \mathbb{N}} \cap G$ is finite.
 \\(3) $P(T) = H$ if and only if $\{\alpha_n\}_{n \in \mathbb{N}} \subset G$, and $\sup\{\text{order}(\alpha_n): n \in \mathbb{N}\} < \infty.$
 \\(4) If $\{\alpha_n\}_{n \in \mathbb{N}} \subset G$, and $\sup\{\text{order}(\alpha_n): n \in \mathbb{N}\} = \infty$, then $P(T)$ is a proper dense subspace of $H.$
\end{theorem}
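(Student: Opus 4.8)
The plan is to reduce everything to a single pointwise description of $P(T)$ and then to one elementary fact about roots of unity.

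\textbf{Part (1).} For $x=\sum_n c_n e_n\in H$ and $m\in\mathbb{N}$ one has $T^m x=\sum_n \alpha_n^m c_n e_n$, so $T^m x=x$ if and only if $\alpha_n^m=1$ for every $n$ with $c_n\neq 0$. Taking the union over $m\in\mathbb{N}$ gives the displayed formula for $P(T)$ immediately. Since $\alpha_n^m=1$ is equivalent to $\overline{\alpha_n}^{\,m}=1$, and $T^*$ is the diagonal operator with entries $\overline{\alpha_n}$, the same set equals $P(T^*)$.

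\textbf{The key lemma.} For each $d\in\mathbb{N}$ there are only finitely many $z\in G$ with $\text{order}(z)\le d$ (they all lie among the $d!$-th roots of unity). Consequently, if the set of values $\{\alpha_n:n\in\mathbb{N}\}\cap G$ is infinite, then $\sup\{\text{order}(\alpha_n):\alpha_n\in G\}=\infty$, so we may pick indices $n_1<n_2<\cdots$ with $d_j:=\text{order}(\alpha_{n_j})$ strictly increasing. This observation is the only genuine content beyond part (1).

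\textbf{Parts (2)–(4).} If $\{\alpha_n\}\cap G$ is finite, let $L$ be the least common multiple of the (finitely many) orders of its elements; then $\alpha_n^L=1$ precisely when $\alpha_n\in G$, so by (1) $P(T)=\overline{\operatorname{span}}\{e_n:\alpha_n\in G\}$, which is closed. If instead $\{\alpha_n\}\cap G$ is infinite, choose $n_j,d_j$ as in the lemma and set $x=\sum_{j\ge 1}j^{-1}e_{n_j}\in l^2(\mathbb{N})$; its partial sums $x^{(J)}=\sum_{j\le J}j^{-1}e_{n_j}$ lie in $P(T)$ (each is fixed by $T^{\operatorname{lcm}(d_1,\dots,d_J)}$) and converge to $x$, while $x\notin P(T)$ since no $m$ is divisible by all $d_j$; hence $P(T)$ is not closed. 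This proves (2). For (3): if $\{\alpha_n\}\subset G$ and $\sup_n\text{order}(\alpha_n)=N<\infty$, then every $\alpha_n$ has order dividing $L:=\operatorname{lcm}(1,2,\dots,N)$, so $T^L=I$ and $P(T)=H$; conversely, $P(T)=H$ forces each $e_n$ periodic, hence $\alpha_n\in G$, and if the orders were unbounded the vector $x$ above would be a non-periodic point, a contradiction. Finally (4): properness is the converse direction just established in (3), and density holds because every finitely supported vector is periodic (fixed by a suitable power of $T$, the exponent being the lcm of the orders of the finitely many relevant $\alpha_n$) and such vectors are dense in $H$; that $P(T)$ is a subspace was already noted in the introduction.

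\textbf{Main obstacle.} There is no real analytic difficulty; the work is entirely in the key lemma on roots of unity — in particular, in passing correctly from ``infinitely many \emph{distinct} roots of unity among the $\alpha_n$'' to ``unbounded orders'' — and in producing the explicit non-periodic limit $x$ together with its finitely supported periodic approximants. Everything else is bookkeeping with the formula from part (1).
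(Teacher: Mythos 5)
Your proposal is correct and follows essentially the same route as the paper: the pointwise characterization in (1), the identification of $P(T)$ with $\ker(T^L-I)=\overline{\operatorname{span}}\{e_n:\alpha_n\in G\}$ when $\{\alpha_n\}\cap G$ is finite, and the explicit square-summable vector with periodic partial sums but unbounded orders in the other direction. The only (immaterial) differences are the choice of coefficients ($j^{-1}$ versus $2^{-j}$) and that you prove the forward direction of (3) directly from the same non-periodic vector rather than by citing the closedness criterion of (2).
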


We also study the permutation operators. Let $\sigma: \mathbb{N}\rightarrow \mathbb{N}$ be a bijection.
We define the permutation operator
\[
 T_{\sigma} \in \mathcal{B}(H) \;,\;
T_{\sigma}(e_n) = e_{\sigma(n)} \mbox{ for all } n \in \mathbb{N}.
\]
%We call $T_{\sigma}$ a permutation operator.
Clearly, $T_{\sigma}$ is invertible with inverse $T^*_{\sigma} =  T_{\sigma^{-1}}$ for each bijection $\sigma: \mathbb{N} \rightarrow \mathbb{N}.$ Therefore, $T_{\sigma}$ is a unitary operator, hence normal. Our fourth main theorem concerns the structure of periodic points of a permutation operator. The theorem parallels our third main theorem.

The bijection $\sigma$ induces an equivalence relation on $\mathbb{N}$, where $m,n \in \mathbb{N}$
are equivalent if and only if $m = \sigma^k(n)$ for some $k \in \mathbb{Z}$.
%Thus, the set $\mathbb{N}$ is partitioned into equivalence classes.
We denote the equivalence classes by
%For each $m \in \mathbb{N}$, we denote by
$[m] = \{\sigma^k(m): k \in \mathbb{Z}\}$,
and denote their cardinalities by $\text{card}([m])$.
%Now assume that $\sigma: \mathbb{N} \rightarrow \mathbb{N}$ is a bijection,

\begin{theorem}
\label{Periodic_Points_of_a_Permutation_Operator}
 Let $\sigma: \mathbb{N} \rightarrow \mathbb{N}$ be a bijection, and let $T := T_\sigma$
 be its permutation operator.
 %Consider the induced equivalence relation on $\mathbb{N}$ given by saying that $n$ is equivalent to $m$ if and only if $m = \sigma^k(n)$ for some $k \in \mathbb{Z}.$ For each $m \in \mathbb{N},$ we denote by $[m] = \{\sigma^k(m): k \in \mathbb{Z}\}$ the equivalence class of $m.$ We define $T := T_{\sigma} \in \mathcal{B}(H)$ by setting $T_{\sigma}(e_n) = e_{\sigma(n)}$ for all $n \in \mathbb{N}.$ Then
 We have the followings:
 \\(1) $\cup_{k \in \mathbb{N}} \{\sum_{n = 1}^{\infty} c_n e_n \in H: \text{card}([n]) \leq k \text{ whenever } c_n \neq 0.\} \subset P(T)$. The inclusion can be proper, and $P(T) = P(T^*).$
 \\(2) $P(T)$ is closed in $H$ if and only if $\sup\{\text{card}([m]):m \in \mathbb{N}, [m] \text{ is finite.}\} < \infty.$
 \\(3) $P(T) = H$ if and only if each equivalence class is finite, and $\sup\{\text{card}([m]): m \in \mathbb{N}\} < \infty.$
 \\(4) If each equivalence class is finite and $\sup\{\text{card}([m]): m \in \mathbb{N}\} = \infty,$ then $P(T)$ is a proper dense subspace of $H.$
 \\(5) If the closure of $P(T)$ is fiinte-codimensional in $H,$ then $P(T)$ is dense in $H,$ so in fact the closure of $P(T)$ is the whole space $H.$
\end{theorem}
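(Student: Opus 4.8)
The plan is to identify the closure $\overline{P(T)}$ completely. I would put $F := \{n \in \mathbb{N} : [n] \text{ is finite}\}$ and $I := \mathbb{N}\setminus F$, and let $H_F$, $H_I$ denote the closed linear spans of $\{e_n : n \in F\}$ and $\{e_n : n \in I\}$; since $F$ and $I$ are each unions of equivalence classes, $H = H_F \oplus H_I$ orthogonally. The key claim will be that $\overline{P(T)} = H_F$. Granting this, assertion (5) follows at once: $H/\overline{P(T)} = H/H_F$ is isometrically isomorphic to $H_I$, whose Hilbert dimension is $\text{card}(I)$, and $\text{card}(I)$ is either $0$ (when $\sigma$ has no infinite cycle) or $\aleph_0$ (each infinite equivalence class being countably infinite, and there being at least one when $I \neq \emptyset$); hence if $\overline{P(T)}$ is finite-codimensional then necessarily $I = \emptyset$, so $\overline{P(T)} = H_F = H$ and $P(T)$ is dense.

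To get $H_F \subseteq \overline{P(T)}$, I would observe that for $n \in F$ with $d := \text{card}([n])$ one has $\sigma^{d}(n) = n$, hence $T^{d} e_n = e_{\sigma^{d}(n)} = e_n$, so $e_n \in P(T)$ for every $n \in F$; taking closed linear spans gives $H_F \subseteq \overline{P(T)}$. The reverse inclusion $P(T) \subseteq H_F$ is where the actual argument lies. If $v = \sum_n c_n e_n$ satisfies $T^{k} v = v$ for some integer $k \geq 1$, then from $T^{k} v = \sum_n c_{\sigma^{-k}(n)} e_n$ we read off that the coefficient sequence $(c_n)$ is constant along every orbit of $\sigma^{k}$ acting on $\mathbb{N}$. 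Now if $n$ belongs to an infinite equivalence class, that class is order-isomorphic to $\mathbb{Z}$ with $\sigma$ acting as translation by $1$, so the $\sigma^{k}$-orbit through $n$ is a coset of $k\mathbb{Z}$ in $\mathbb{Z}$ and is therefore infinite; a square-summable sequence that is constant on an infinite set of indices must vanish there, so $c_n = 0$. Thus every periodic vector is supported within $F$, i.e. belongs to $H_F$.

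Combining the two inclusions gives $\overline{P(T)} = H_F$, and the theorem follows. I do not expect a genuine obstacle: the only step with real content is the orbit/square-summability argument establishing $P(T) \subseteq H_F$, and it is elementary; the remainder is bookkeeping with the orthogonal decomposition $H = H_F \oplus H_I$. (As a byproduct this shows that, for any permutation operator, $\overline{P(T)}$ is always exactly the closed span of the basis vectors indexed by the finite cycles of $\sigma$ — which makes (5) nearly automatic.)
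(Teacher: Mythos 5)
Your identification of $\overline{P(T)}$ as $H_F$, the closed span of the basis vectors indexed by finite equivalence classes, is correct and is a genuinely cleaner organizing principle than the paper's: the paper never isolates $H_F$ explicitly, but proves the same two facts piecemeal (that $e_n \in P(T)$ whenever $[n]$ is finite, and the square-summability argument forcing $c_n = 0$ on infinite classes, which appears inside its proofs of parts (2) and (5)). Your orbit argument for $P(T) \subseteq H_F$ is sound, and granting $\overline{P(T)} = H_F$, part (5) follows exactly as you say; parts (3) and (4) also follow once one adds the observation that if every class is finite with cardinalities bounded by $N$ then $T^{N!} = I$.

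However, ``the theorem follows'' overstates what the closure computation delivers; two claims in the statement are not consequences of $\overline{P(T)} = H_F$ and are left unproved. First, part (1) asserts that the inclusion of $Q := \cup_{k}\{\sum_n c_n e_n : \mathrm{card}([n]) \le k \text{ whenever } c_n \neq 0\}$ in $P(T)$ \emph{can be proper}. This requires an explicit construction, because $P(T)$ may contain vectors supported on classes of unbounded cardinality provided a single power of $\sigma$ fixes every support index; the paper takes $\sigma$ with cycle decomposition $(1)(2\,3)(4\,5\,6\,7)\cdots$ and a vector supported on alternate entries of each cycle, so that $T^2 x = x$ while the cycle lengths meeting the support are unbounded. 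Nothing in your framework produces such an example. Second, for part (2) you still owe the ``only if'' direction: when $\sup\{\mathrm{card}([m]) : [m] \text{ finite}\} = \infty$ you must exhibit a point of $H_F \setminus P(T)$ --- for instance $x = \sum_{j} 2^{-j} e_{n_j}$ with $\mathrm{card}([n_j])$ strictly increasing, which no single $T^N$ can fix --- to conclude $P(T) \neq \overline{P(T)}$. This is short, but it is an argument rather than bookkeeping, and without it the equivalence in (2) is not established. (The remaining loose end, $P(T) = P(T^*)$, is immediate from $T^* = T^{-1} = T_{\sigma^{-1}}$ and $P(S) = P(S^{-1})$ for invertible $S$.)
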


As a consequence of Theorem \ref{Periodic_Points_of_a_Diagonal_Operator}, we derive another interesting result. Our last result shows that every unitary diagonal operator $T \in \mathcal{B}(H)$ can be approximated by a sequence $\{T_n\}_{n \in \mathbb{N}}$ of diagonal operators with the property that $P(T_n) = H$.
% and $T_n$ is diagonal.
% with respect to $\{e_n\}_{n \in \mathbb{N}}$ for all $n \in \mathbb{N}.$
In other words, the set of all diagonal operators $T$ with $P(T) = H$ is dense in the set of all unitary diagonal operators.

\begin{theorem}
\label{Apprximation_by_Diagonal_Operators_with_allspace_as_Fixed_Points}
Let $T \in \mathcal{B}(H)$ be an operator.
% which is diagonal with respect to $\{e_n\}_{n \in \mathbb{N}}$.
The following conditions are equivalent:
 \\(1) $T$ is unitary.
 \\(2) There exists a sequence $\{T_n\}_{n \in \mathbb{N}}$ of diagonal operators
 %such that each $T_n$ is diagonal with respect to $\{e_n\}_{n \in \mathbb{N}},$
 such that $P(T_n) = H$ for all $n \in \mathbb{N},$ and $\norm{T_n - T} \rightarrow 0$ as $n \rightarrow \infty.$
\end{theorem}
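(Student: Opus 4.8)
The plan is to prove $(2)\Rightarrow(1)$ by a soft closedness argument and $(1)\Rightarrow(2)$ by an explicit construction that rounds the diagonal entries of $T$ onto a finite grid of roots of unity, so that Theorem~\ref{Periodic_Points_of_a_Diagonal_Operator}(3) applies to each approximant.

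For $(2)\Rightarrow(1)$, I would first note that a norm limit of diagonal operators is again diagonal: for each fixed $k$ and each $j\ne k$ one has $\langle Te_k,e_j\rangle=\lim_n\langle T_ne_k,e_j\rangle=0$, so $Te_k\in\mathbb{C}e_k$; in particular condition $(2)$ can only hold when $T$ itself is diagonal, which is the setting intended by the paragraph preceding the statement. Next, by Theorem~\ref{Periodic_Points_of_a_Diagonal_Operator}(3) the hypothesis $P(T_n)=H$ forces every diagonal entry of $T_n$ to lie in $G\subset S^1$, so each $T_n$ is unitary. Finally, the unitary group is norm-closed in $\mathcal{B}(H)$: from $\|T_n-T\|\to0$ we get $T_n^*T_n\to T^*T$ and $T_nT_n^*\to TT^*$, whence $T^*T=TT^*=I$. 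Thus $T$ is unitary.

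For $(1)\Rightarrow(2)$, write $Te_k=\alpha_k e_k$ with $|\alpha_k|=1$ for all $k$ (using that $T$ is a diagonal unitary). For each $n\in\mathbb{N}$ let $R_n=\{e^{2\pi i j/n}:0\le j<n\}$ be the group of $n$-th roots of unity, and for each $k$ choose $\beta_k^{(n)}\in R_n$ nearest to $\alpha_k$; let $T_n$ be the diagonal operator with $T_ne_k=\beta_k^{(n)}e_k$. Since consecutive elements of $R_n$ subtend an angle $2\pi/n$, every point of $S^1$ lies within angular distance $\pi/n$ of $R_n$, hence within Euclidean distance $2\sin(\pi/(2n))\le\pi/n$; therefore $|\alpha_k-\beta_k^{(n)}|\le\pi/n$ for every $k$, and since the norm of a diagonal operator is the supremum of the moduli of its entries, $\|T_n-T\|\le\pi/n\to0$. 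On the other hand $\beta_k^{(n)}\in G$ with $\mathrm{order}(\beta_k^{(n)})\mid n$, so $\sup_k\mathrm{order}(\beta_k^{(n)})\le n<\infty$, and Theorem~\ref{Periodic_Points_of_a_Diagonal_Operator}(3) yields $P(T_n)=H$. This $\{T_n\}$ is the required sequence.

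I do not expect a serious obstacle. The one point that must be handled with care in $(1)\Rightarrow(2)$ is \emph{uniformity}: the quantity $\|T_n-T\|=\sup_k|\alpha_k-\beta_k^{(n)}|$ has to be made small simultaneously in all coordinates, which is exactly why every $\alpha_k$ is rounded onto the \emph{same} finite set $R_n$, whose mesh $2\pi/n$ is independent of $k$; rounding each $\alpha_k$ to an individually chosen nearby root of unity would improve the pointwise approximation but would make the orders $\mathrm{order}(\beta_k^{(n)})$ unbounded, destroying the hypothesis $\sup_k\mathrm{order}(\beta_k^{(n)})<\infty$ of Theorem~\ref{Periodic_Points_of_a_Diagonal_Operator}(3). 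It is also worth recording explicitly, as the closedness observation above shows, that $(2)$ already forces $T$ to be a diagonal operator, so the substantive content of the equivalence is precisely that the diagonal operators with all of $H$ as periodic points are norm-dense among the unitary diagonal operators, as stated in the abstract.
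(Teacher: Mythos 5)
Your proof is correct and follows essentially the same route as the paper: for $(1)\Rightarrow(2)$ the paper rounds each diagonal entry onto the $2^n$-th roots of unity rather than the $n$-th (yielding the same $O(1/2^n)$ versus $O(1/n)$ estimate), and for $(2)\Rightarrow(1)$ it likewise combines Theorem~\ref{Periodic_Points_of_a_Diagonal_Operator}(3) with norm-closedness of the set of unitaries. Your explicit observation that $(2)$ forces $T$ to be diagonal is a worthwhile addition rather than a deviation: the paper's proof of $(1)\Rightarrow(2)$ silently writes $T(e_n)=\alpha_n e_n$, i.e.\ it tacitly reads condition $(1)$ as ``$T$ is a \emph{unitary diagonal} operator,'' which is the only reading under which the equivalence holds, since (as the paper's own preceding remark notes) a unitary such as the bilateral shift is not diagonal and hence, by your limit argument, cannot be a norm limit of diagonal operators.
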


The main theorems are proved in the sections as follows:
\\Section 2: Theorem \ref{Condition_for_a_Normal_Operator_to_have_no_nonzero_periodic_points} and Theorem \ref{structure_of_normal_operators_with_whole_space_as_periodic_points}.
\\Section 3: Theorem \ref{Periodic_Points_of_a_Diagonal_Operator}.
\\Section 4: Theorem \ref{Periodic_Points_of_a_Permutation_Operator}.
\\Section 5: Theorem \ref{Apprximation_by_Diagonal_Operators_with_allspace_as_Fixed_Points}.

\section{Normal Operators with no Nonzero Periodic Points}

In this section, we prove Theorem \ref{Condition_for_a_Normal_Operator_to_have_no_nonzero_periodic_points}, which gives a condition for a normal operator to have no nonzero periodic point. The main tool is the continuous functional calculus in spectral theory, which we briefly review. Let
$H$ be a complex Hilbert space and let $T \in \mathcal{B}(H)$ be a normal operator.
Let $C^*\{I,T\}$ be the norm-closed $C^*$-subalgebra of $\mathcal{B}(H)$ generated by the identity operator $I$ and $T$.
As a result of the Gelfand transform, there exists a unique isometric $*$-isomorphism
\[ \phi : C(\sigma(T)) \rightarrow C^*\{I,T\}\]
 such that $\phi(p) = p(T)$ for any polynomial $p$.  Moreover, for two normal operators $T$ and $S$, $C(\sigma(T))$ and $C(\sigma(S))$ are $*$-isometric isomorphic as $C^*$-algebras if and only if the topological spaces $\sigma(T)$ and $\sigma(S)$ are homeomorphic (by the Banach-Stone Theorem, see for instance \cite{John_Conway} p.172).
This result classifies all commutative $C^*$-algebras up to isometric $*$-isomorphism.

\bigskip

{\bf Proof of Theorem \ref{Condition_for_a_Normal_Operator_to_have_no_nonzero_periodic_points}:}
 \\ Let $T$ be a normal operator.
 Recall from (\ref{dua}), (\ref{spe}) and (\ref{rou}) that
 $P(T)$ denotes the periodic points of $T$,
 $\sigma(T)$ denotes the spectrum of $T$, and $G$ denotes the roots of unity.
 We first show that
 \begin{equation}
 \sigma(T) \cap G = \emptyset \; \Longrightarrow \; P(T) = \{0\} .
 \label{shoo}
 \end{equation}

 Assume by contradiction that $\sigma(T) \cap G = \emptyset$, but there exists $0 \neq x \in P(T)$. Pick a $k \in \mathbb{N}$ such that $T^k(x) = x$. It then follows that $T^k-I$ is not invertible, so $1 \in \sigma(T^k)$. However, by the spectral mapping theorem (c.f. Theorem 2.1.14 in p.43 in \cite{Gerard_J_Murphy},) $\sigma(T^k) = \{z^k: z \in \sigma(T)\},$ so the above implies that there is a $z \in \sigma(T)$ which is also a $k$-th root of unity, we obtain a contradiction. This proves (\ref{shoo}).

 We proceed to show that there exists a normal operator $S \in \mathcal{B}(H)$ for which
 \begin{equation}
 \sigma(S) \cap G \neq \emptyset \;,\; P(S) = \{0\} .
 \label{shi}
 \end{equation}
  Let $\{q_n\}_{n \in \mathbb{N}}$ be an enumeration of the countable set $\mathbb{Q} \cap [0,1).$ Hence, each $t_n := q_n+\sqrt{2}$ is irrational and $\{e^{2\pi i t_n}: n \in \mathbb{N}\}$ is dense in the unit circle $S^1$. Now define a diagonal operator $S \in \mathcal{B}(H)$ with $S(e_n) = e^{2\pi i t_n} e_n$ for all $n \in \mathbb{N}.$ Note that $S^*$ is also diagonal with $S^*(e_n) = \bar e^{-2\pi i t_n} e_n$ for all $n \in \mathbb{N},$ so $S S^* = S^* S$ and so $S$ is normal. (In fact, $S$ is unitary.) Moreover, $\sigma(S)$ is the closure of the set $\{e^{2 \pi i t_n}: n \in \mathbb{N}\}$, which is the whole unit circle $S^1$.

  We now check that $P(S) =\{0\}$. Assume otherwise, namely there exists
  $0 \neq x = \sum_{n = 1}^{\infty} c_n e_n \in P(S)$. So there exists a $c_m \neq 0$. Pick $M \in \mathbb{N}$ for which $S^M(x) = x.$ Then $\sum_{n = 1}^{\infty} c_n e_n = x = S^M(x) = \sum_{n = 1}^{\infty} e^{2\pi i t_n M} c_n e_n$, and so
  \[ c_m = \langle \sum_{n = 1}^{\infty} c_n e_n,e_m \rangle  =
  \langle \sum_{n = 1}^{\infty} e^{2\pi i t_n M} c_n e_n,e_m \rangle = e^{2\pi i t_m M} c_m .\]
  Since $c_m \neq 0$, this implies that $e^{2\pi i t_m M} = 1$, which contradicts the fact that $t_m$ is irrational.
  This proves (\ref{shi}). The theorem follows from (\ref{shoo}) and (\ref{shi}).
$\Box$

\begin{remark}
{\rm (1) Consider the case where the Hilbert space is the finite dimensional $\mathbb{C}^n$. Let $T: \mathbb{C}^n \rightarrow \mathbb{C}^n$ be a normal operator. In this case, $\sigma(T)$ is simply the set of all eigenvalues of $T,$ which is a finite set in $\mathbb{C}$. Theorem \ref{Condition_for_a_Normal_Operator_to_have_no_nonzero_periodic_points} says that if $\sigma(T)$ does not contain a root of unity, then $T$ has no periodic points other than $0$.
\\(2) One can come up with other examples satisfying the conditions of part (2) of  Theorem \ref{Condition_for_a_Normal_Operator_to_have_no_nonzero_periodic_points}. Here we give another such example. Take $H$ to be $l^2(\mathbb{Z}),$ and for each $k \in \mathbb{Z}$ take $\delta_k \in l^2(\mathbb{Z})$ to be sequence given by $\delta_k(n) = 1$ if $n = k$ and $\delta_k(n) = 0$ otherwise. Consider the bilateral shift operator $S:l^2(\mathbb{Z}) \rightarrow l^2(\mathbb{Z})$ given by $S(\delta_k) = \delta_{k+1}$ for all $k \in \mathbb{Z}$.
Then $S$ is a unitary, and hence normal operator. Also, $\sigma(S)$ is the unit circle, and $P(S) =\{0\}$.
\\(3) Note that Theorem \ref{Condition_for_a_Normal_Operator_to_have_no_nonzero_periodic_points} only gives a necessary condition
for the existence of $0 \neq x \in P(T)$, but not a sufficient condition.
Therefore, as a possible direction for future studies, it is natural to ask if one could have other interesting necessary or sufficient conditions for the existence of $0 \neq x \in P(T)$.}
$\Box$
\end{remark}

We proceed to study normal operators $T$ where $P(T)=H$. We shall show that they are unitary, diagonalizable, and of finite exponent
(i.e. $T^k = I$ for some $k \in \mathbb{N}$).
We shall need the following basic lemma in the proof of Theorem \ref{structure_of_normal_operators_with_whole_space_as_periodic_points}.

\begin{lemma}
\label{orthogonality_lemma}
Let $M$ and $N$ be both closed subspaces of the Hilbert space $H.$ Assume that $M$ is orthogonal to $N$.
% (That is, $<x,y> = 0$ for all $x \in M$ and $y \in N.$)
Then the sum $M+N := \{x+y: x \in M, y \in N\}$ is also closed in $H.$
\end{lemma}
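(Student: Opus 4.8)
The plan is to verify closedness directly from the sequential characterization: take an arbitrary sequence $\{z_j\}_{j \in \mathbb{N}}$ in $M+N$ with $z_j \to z$ in $H$, and show that $z \in M+N$. By definition of the algebraic sum, for each $j$ we may write $z_j = x_j + y_j$ with $x_j \in M$ and $y_j \in N$; since $M \perp N$ forces $M \cap N = \{0\}$, this decomposition is in fact unique, though we will only need existence. The key device is the Pythagorean identity: because $x_j - x_k \in M$ is orthogonal to $y_j - y_k \in N$, we have
\[
\|z_j - z_k\|^2 \;=\; \|(x_j - x_k) + (y_j - y_k)\|^2 \;=\; \|x_j - x_k\|^2 + \|y_j - y_k\|^2 .
\]

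First I would note that a convergent sequence is Cauchy, so the left-hand side above tends to $0$ as $j,k \to \infty$. Since both terms on the right are nonnegative and their sum tends to $0$, each of $\{x_j\}$ and $\{y_j\}$ is a Cauchy sequence in $H$. By completeness of $H$ they converge, say $x_j \to x$ and $y_j \to y$; and since $M$ and $N$ are closed, $x \in M$ and $y \in N$. Finally, passing to the limit in $z_j = x_j + y_j$ and using continuity of addition gives $z = x + y \in M+N$, which is what we wanted.

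I do not expect a genuine obstacle here; the only thing to be careful about is that the argument uses orthogonality of $M$ and $N$ in an essential way — without it, the cross term $2\,\mathrm{Re}\langle x_j - x_k, y_j - y_k\rangle$ need not vanish and the splitting into two Cauchy sequences fails (indeed the conclusion itself is false in general for non-orthogonal closed subspaces). It is worth remarking that the same computation shows $M+N$ is in fact the orthogonal direct sum $M \oplus N$, and that the natural projections onto $M$ and $N$ are bounded, a fact that will be convenient when this lemma is applied in the proof of Theorem \ref{structure_of_normal_operators_with_whole_space_as_periodic_points}.
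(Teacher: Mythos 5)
Your proof is correct and follows essentially the same route as the paper: both arguments take a convergent sequence $z_j = x_j + y_j$ in $M+N$, use orthogonality (via the Pythagorean identity, which you make explicit where the paper simply states the resulting inequality $\|x_j - x_k\| \le \|z_j - z_k\|$) to deduce that $\{x_j\}$ and $\{y_j\}$ are Cauchy, and conclude by closedness of $M$ and $N$. No issues.
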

\begin{proof}
    Let $\{z_n\}_{n \in \mathbb{N}} \subset M+N$ be a sequence and $z_n \rightarrow z$ for some $z \in H$ as $n \rightarrow \infty.$ We will show that $z \in M+N$ also. Write each $z_n = x_n+y_n,$ where $x_n \in M$ and $y_n \in N.$ Note that $\norm{x_n-x_m} \leq \norm{z_n-z_m} \rightarrow 0$ as $n,m \rightarrow \infty.$ (by the fact that $\{z_n\}_{n \in \mathbb{N}}$ is a Cauchy sequence.) This shows that $\{x_n\}_{n \in \mathbb{N}}$ is a Cauchy sequence in $M.$ By the completeness of $M,$ $x_n \rightarrow x$ for some $x \in M.$ Similarly, $y_n \rightarrow y$ for some $y \in N.$ It follows that $z = x+y \in M+N.$ We complete the proof.
\end{proof}

We are now ready to give a proof of Theorem \ref{structure_of_normal_operators_with_whole_space_as_periodic_points}.
Let $Per(T)$ denote set of all periods of the operator $T$.

\bigskip

{\bf Proof of Theorem \ref{structure_of_normal_operators_with_whole_space_as_periodic_points}:}

%We divide the proof into several steps as follows.

We have $P(T) = H$, which is clearly closed in itself, so by Theorem 3.3 in \cite{{Periodic_Points_on_Hilbert_Space}}, it follows that $Per(T)$ is a finite set. Let $M$ be the least common multiple of elements of $Per(T)$. Then
\begin{equation}
T^M = I .
\label{onee}
\end{equation}

%By (\ref{one}), let $M \in \mathbb{N}$ satisfy $T^M = I$.
Consider the polynomial $q(z) = z^M-1$. Let $\phi: C(\sigma(T)) \rightarrow C^*\{I,T\}$ be the continuous functional calculus for $T$. We see that $\phi(q) = q(T) = T^M-I = 0.$ Since $\phi$ is an isometric $*$-isomorphism, it follows that $q$ vanishes on $\sigma(T)$ identically. Hence, each point of $\sigma(T)$ is a root of $q(z)$. This implies that $\sigma(T)$ is finite. Thus we can write
\[ \sigma(T) = \{\lambda_1, ..., \lambda_m\} .\]
 Each $\lambda_i$ is isolated, so it is an eigenvalue of $T$,
     and we let $E_i := \{x \in H: Tx = \lambda_i x\}$ be its eigenspace. Note that $E_i = ker(T-\lambda_i I),$ so $E_i$ is closed in $H.$ Moreover, note that since $T$ is normal, $\{E_i\}_1^m$ are mutually orthogonal. (See \cite{Friedberg_Insel_Spence} p.371 Theorem 6.15 for instance.) By applying Lemma \ref{orthogonality_lemma} iteratively, and we see that the sum $E_1+...+E_m$ is also closed in $H$.

     We claim that
     \begin{equation}
     E_1+...+E_m = H.
      \label{jum}
      \end{equation}
      To see this, we assume otherwise and pick a nonzero $0 \neq v \in H$ which is orthogonal to $E_1+...+E_m$. Let $V := \{p(T)v: p \text{ is a complex polynomial.}\},$ which is the smallest $T-$invariant subspace of $H$ generated by $v.$ By (\ref{onee}), $T^M = I,$ so $V$ is finite-dimensional. Moreover, $V$ is orthogonal to $E_1+...+E_m.$ The restriction $T|_V : V \rightarrow V$ has an eigenvalue $\lambda$ and a corresponding eigenvector $u$. Note that $\lambda \in \sigma(T) = \{\lambda_1, ..., \lambda_m\},$ so $\lambda = \lambda_j$ for some $j.$ However this implies that $u \in E_j,$ which contradicts to the fact that $E_j \cap V = \{0\}$. This proves (\ref{jum})
      as claimed.

      Take orthonormal basis $\mathcal{B}_i$ for each eigenspace $E_i$. By (\ref{jum}),  the union $\cup_{i = 1}^m \mathcal{B}_i$ is an orthonormal basis for $H$ consisting entirely of eigenvectors of $T$. It follows that
$T$ is diagonalizable, namely there is an orthonormal basis $\{u_n\}_{n \in \mathbb{N}}$ of $H$ such that
\[ T(u_i) = c_i u_i \mbox{ for all } i \in \mathbb{N} .\]
Recall that by (\ref{onee}), $T^M = I,$ so for each $i,$ $u_i = T^M(u_i) = c_i^M u_i.$ So each $c_i$ is an $M$-th root of unity. Consequently, $|c_i| = 1$ for all $i,$ and this shows that $T$ is unitary. Moreover, $\sup\{\text{order}(c_n): n \in \mathbb{N}\} \leq M < \infty.$ Let $N$ to be the least common multiple of $\{\text{order}(c_i): i \in \mathbb{N}\},$ and one has $T^N = I.$ We complete the proof.
$\Box$

We give a few remarks regarding Theorem \ref{structure_of_normal_operators_with_whole_space_as_periodic_points}.

\begin{remark}
    {\rm Note that if $K$ is just an infinite-dimensional complex inner product space, which is not assumed to be a Hilbert space, then it is not true that a bounded linear operator $T: K \rightarrow K$ with $P(T) = K$ must satisfy $T^N = I$ for some $N \in \mathbb{N}.$ We give an example for this. Again take $H$ to be an infinite-dimensional separable complex Hilbert space with countable orthonormal basis $\{e_n\}_{n \in \mathbb{N}}.$ Take $K = \cup_{n \in \mathbb{N}} \{c_1 e_1+...+c_n e_n: c_1, ..., c_n \in \mathbb{C}.\}.$ (That is , $K$ is the space of all finite linear combinations of $\{e_n\}_{n \in \mathbb{N}},$ and $\{e_n\}_{n \in \mathbb{N}}$ is a Hamel basis of $K.$) Now define $T: K \rightarrow K$ by $T(e_n) = \xi_n e_n$ for all $n \in \mathbb{N},$ where $\xi_n := e^{\frac{2 \pi i}{n}}.$ Then every element of $K$ is periodic under $T,$ but there is no $N \in \mathbb{N}$ for which $T^N = I.$}
$\Box$
\end{remark}
The above example shows the importance of completeness in Theorem \ref{structure_of_normal_operators_with_whole_space_as_periodic_points}.

The next example shows that the normality of the operator also plays an important role in Theorem \ref{structure_of_normal_operators_with_whole_space_as_periodic_points}.

\begin{example}
    {\rm Consider the operator $T \in \mathcal{B}(H)$ defined by $T(e_1) = 2e_2,$ $T(e_2) = \frac{1}{2} e_1,$ and $T(e_j) = e_j$ for all $j \geq 3,$ $j \in \mathbb{N}.$ It is then clear that $P(T) = H$ and that $T$ is not a normal operator. In this case, $T$ is not unitary as asserted in Theorem \ref{structure_of_normal_operators_with_whole_space_as_periodic_points}. This shows that the normality of $T$ is crucial in Theorem \ref{structure_of_normal_operators_with_whole_space_as_periodic_points}.}
$\Box$
\end{example}

\section{Periodic Points of a Diagonal Operator}

By Theorem \ref{structure_of_normal_operators_with_whole_space_as_periodic_points}, a normal operator with the whole space as periodic points must be diagonalizable. It is therefore instructive to study the diagonal operators in more detail. We shall give a more detailed treatment of such operators here. In this section, we prove Theorem \ref{Periodic_Points_of_a_Diagonal_Operator} and examine some basic examples. We first characterize the space of all periodic points of a diagonal operator. In addition, we also show that for a diagonal operator $T \in \mathcal{B}(H),$ the space of all periodic points remains the same under taking adjoints.

\begin{proposition}
\label{Characterization_of_Periodic_Points}
%(Characterization of the Periodic Points of a Diagonal Operator)
    Let $\{\alpha_n\}_{n \in \mathbb{N}}$ be a bounded sequence of complex numbers, and let $T \in \mathcal{B}(H)$ be a diagonal operator with $T(e_n) = \alpha_n e_n$ for all $n \in \mathbb{N}.$ Then $P(T) = P(T^*) = \cup_{k \in \mathbb{N}} \{\sum_{n = 1}^{\infty} c_n e_n \in H: \{c_n\}_{n \in \mathbb{N}} \in l^2(\mathbb{N}), \alpha_n^k = 1 \text{ whenever } c_n \neq 0\}.$
\end{proposition}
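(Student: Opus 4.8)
The plan is to characterize $P(T)$ directly from the action of $T$ on coordinates, then observe that the characterization is symmetric under $\alpha_n \mapsto \bar\alpha_n$, which immediately gives $P(T) = P(T^*)$.

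First I would fix notation: for $x = \sum_n c_n e_n \in H$ and $k \in \mathbb{N}$, since $T$ is diagonal we have $T^k(x) = \sum_n \alpha_n^k c_n e_n$, which converges in $H$ because $\{\alpha_n\}$ is bounded (indeed $|\alpha_n^k c_n| \le \|T\|^k |c_n|$). Comparing Fourier coefficients against $\{e_m\}$, the identity $T^k(x) = x$ holds if and only if $\alpha_n^k c_n = c_n$ for every $n$, i.e. if and only if $(\alpha_n^k - 1) c_n = 0$ for all $n$, i.e. if and only if $\alpha_n^k = 1$ whenever $c_n \neq 0$. Therefore $x$ is a periodic point of $T$ (with some period dividing a chosen $k$) exactly when there exists $k \in \mathbb{N}$ with $\alpha_n^k = 1$ for all $n$ in the support of $x$. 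Taking the union over all $k \in \mathbb{N}$ gives precisely
\[
P(T) = \bigcup_{k \in \mathbb{N}} \Bigl\{ \sum_{n=1}^\infty c_n e_n \in H : \{c_n\} \in l^2(\mathbb{N}),\ \alpha_n^k = 1 \text{ whenever } c_n \neq 0 \Bigr\}.
\]
One small point to be careful about: a periodic point by definition satisfies $T^m(p) = p$ for some positive integer $m$, and conversely any element of the right-hand set with parameter $k$ satisfies $T^k(p) = p$; so the two descriptions match without any issue about prime periods versus periods.

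For the equality $P(T) = P(T^*)$, recall that $T^*$ is the diagonal operator with $T^*(e_n) = \bar\alpha_n e_n$. Applying the displayed characterization to $T^*$ in place of $T$ replaces the condition ``$\alpha_n^k = 1$ whenever $c_n \neq 0$'' by ``$\bar\alpha_n^k = 1$ whenever $c_n \neq 0$''. But $\bar\alpha_n^k = \overline{\alpha_n^k}$, and a complex number $z$ satisfies $z = 1$ if and only if $\bar z = 1$; hence the two conditions define the same subset of $H$ for each $k$, and taking unions over $k$ yields $P(T^*) = P(T)$. (Equivalently, $\alpha_n^k = 1$ forces $|\alpha_n| = 1$, whence $\bar\alpha_n = \alpha_n^{-1}$ and $\bar\alpha_n^k = (\alpha_n^k)^{-1} = 1$, and symmetrically.)

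There is no serious obstacle here: the argument is essentially a coordinate-wise computation plus the elementary symmetry of the condition $z^k = 1$ under conjugation. The only things that need a word of care are the convergence of the series $T^k(x)$ in $H$ (handled by boundedness of $\{\alpha_n\}$) and the legitimacy of reading off the coefficient condition from an equality of vectors in $H$ (justified by taking inner products with the orthonormal basis $\{e_m\}$, exactly as in the proof of Theorem \ref{Condition_for_a_Normal_Operator_to_have_no_nonzero_periodic_points}). I would present the coefficient computation once, cleanly, and then invoke it for both $T$ and $T^*$.
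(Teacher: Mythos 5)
Your proposal is correct and follows essentially the same route as the paper's proof: a coordinate-wise computation showing $T^k(x)=x$ if and only if $\alpha_n^k=1$ on the support of $x$, followed by the observation that $\alpha_n^k=1$ if and only if $\bar\alpha_n^k=1$ to conclude $P(T)=P(T^*)$. The only difference is presentational — you prove both implications at once via the equivalence $(\alpha_n^k-1)c_n=0$ for all $n$, while the paper treats the two inclusions separately — and your explicit remarks on convergence and on reading off coefficients are sound.
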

\begin{proof}
Firstly, we show that
\begin{equation}
\label{characterization_of_P(T)_diagonal}
P(T) = \cup_{k \in \mathbb{N}} \{\sum_{n = 1}^{\infty} c_n e_n \in H: \{c_n\}_{n \in \mathbb{N}} \in l^2(\mathbb{N}), \alpha_n^k = 1 \text{ whenever } c_n \neq 0\}.
\end{equation}
 Assume $x = \Sigma_{n = 1}^{\infty} c_n e_n \in H, \text{ where }\{c_n\}_{n \in \mathbb{N}} \in l^2(\mathbb{N})$ and that there is a $k \in \mathbb{N}$ such that $\alpha_n^k = 1 \text{ whenever } c_n \neq 0.$ Then
 \[ T^k(x) = T^k(\Sigma_{n = 1}^{\infty} c_n e_n) = T^k(\Sigma_{c_n \neq 0} c_n e_n) = \Sigma_{c_n \neq 0} \alpha_n^k c_n e_n = \Sigma_{c_n \neq 0} c_n e_n = \Sigma_{n = 1}^{\infty} c_n e_n = x.
  \]
  This shows that $x \in P(T).$ Conversely, suppose that $x \in P(T),$ so there exists a $k \in \mathbb{N}$ with $T^k(x) = x.$ Assume that $x = \Sigma_{n = 1}^{\infty} c_n e_n \in H, \text{ where }\{c_n\}_{n \in \mathbb{N}} \in l^2(\mathbb{N}).$ Then the above implies that $\Sigma_{n = 1}^{\infty} c_n e_n = x = T^k(x) = \Sigma_{n = 1}^{\infty} \alpha_n^k c_n e_n.$ It follows that $\alpha_n^k = 1$ whenever $c_n \neq 0.$ The above argument proves \eqref{characterization_of_P(T)_diagonal}.

It remains to show that
$P(T) = P(T^*)$.
Since $T$ is a diagonal operator with $T(e_n) = \alpha_n e_n$ for all $n \in \mathbb{N},$ it follows that $T^*$ is also a diagonal operator with $T^*(e_n) = \bar \alpha_n e_n$ for all $n \in \mathbb{N}.$ So
\[
\begin{array}{rl}
P(T) & = \cup_{k \in \mathbb{N}} \{\sum_{n = 1}^{\infty} c_n e_n \in H: \{c_n\}_{n \in \mathbb{N}} \in l^2(\mathbb{N}), \alpha_n^k = 1 \text{ whenever } c_n \neq 0\} \\
& = \cup_{k \in \mathbb{N}} \{\sum_{n = 1}^{\infty} c_n e_n \in H: \{c_n\}_{n \in \mathbb{N}} \in l^2(\mathbb{N}), \bar \alpha_n^k = 1 \text{ whenever } c_n \neq 0\} \\
& = P(T^*).
\end{array}
\]
 %This establishes \eqref{periodic_points_equal_to_adjoint_diagonal}.
We complete the proof.
\end{proof}

In other words, an element $x = \Sigma_{n = 1}^{\infty} c_n e_n \in H$ is a periodic point of the diagonal operator $T$ if and only if there exists a positive integer $k \in \mathbb{N}$ such that any $\alpha_n$ with $c_n \neq 0$ is a $k$-th root of unity. We now apply Proposition \ref{Characterization_of_Periodic_Points} to consider some examples.

\begin{example}
{\rm We study an example arising from Fourier analysis.
Let $H := L^2_{\mathbb{C}}(-\pi,\pi)$, the square-integrable complex-valued functions on $(-\pi,\pi)$.
%to be the Hilbert space of all equivalence classes of complex-valued square-integrable functions on the interval $(-\pi,\pi).$ (Here we identify %two functions in $L^2_{\mathbb{C}}(-\pi,\pi)$ if they agree a.e. on the interval $(-\pi,\pi)$.)
Let  $u_n := \frac{1}{\sqrt{2\pi}} e^{inx} \in H$ for all $n \in \mathbb{Z}.$
It is well-known that $\{u_n\}_{n \in \mathbb{Z}}$ forms an orthonormal basis of $H.$ Fix $h \in H$, and let $\lambda_n := \frac{1}{\sqrt{2\pi}}\int_{-\pi}^{\pi} h(x) e^{-inx} dx$ for all $n \in \mathbb{Z}$ be the $n$-th Fourier coefficient. Define $K \in \mathcal{B}(H)$ by convolution:
\begin{equation}
    Kf := \frac{1}{\sqrt{2\pi}}\int_{-\pi}^{\pi} h(x-y)f(y) \, dy \mbox{ for all } f \in H = L^2_{\mathbb{C}}(-\pi,\pi).
\end{equation}
Then
\[
\begin{array}{rl}
    (K u_n)(x)
    & = \frac{1}{\sqrt{2\pi}}\int_{-\pi}^{\pi} h(x-y) \frac{1}{\sqrt{2\pi}} e^{iny} \,dy \\
    & = \frac{1}{2\pi}\int_{-\pi}^{\pi} h(x-y) e^{iny} \,dy \\
   & = \frac{1}{2\pi}\int_{x-\pi}^{x+\pi} h(u) e^{in(x-u)} \,du \; (\text{let } u = x-y.) \\
   & = \frac{1}{2\pi}\int_{-\pi}^{\pi} h(u) e^{in(x-u)} \, du \\
    & = \lambda_n u_n.
\end{array}
\]
The above computation shows that $K$ is diagonal with respect to $\{u_n\}_{n \in \mathbb{Z}}.$ Now, by the familiar Riemann-Lebesgue lemma in Fourier analysis (See for instance \cite{{Wheeden_Zygmund}} Theorem 12.21 in p.312.), $\lambda_n \rightarrow 0$ as $n \rightarrow \infty.$ Hence, there exists only finitely many $\lambda_n$ that are roots of unity. By Proposition \ref{Characterization_of_Periodic_Points}, it follows that the space $P(K)$ of all periodic points of the convolution operator $K$ is finite-dimensional.}
$\Box$
\end{example}

We consider another interesting example.

\begin{example}
{\rm In this example, we use Proposition \ref{Characterization_of_Periodic_Points} to construct a unitary diagonal operator $T \in \mathcal{B}(H)$ such that $P(T)$ is an increasing union of countably many finite-dimensional subspaces of $H,$ and the closure of $P(T)$ has codimension $1$ in $H$. Let $\alpha_1 = e^{2\sqrt{2}\pi i}$, and let $\alpha_n := e^{\frac{2\pi i}{2^{n-1}}}$
for all $n \geq 2$. Let $T \in \mathcal{B}(H)$ be the diagonal operator with $T(e_n) = \alpha_n e_n$ for all $n.$ Note that $T^*(e_n) = \bar \alpha_n e_n = \alpha_n^{-1} e_n = T^{-1}(e_n),$ so that $T^* = T^{-1}$ and $T$ is unitary.
Now, in order to find the space $P(T),$ we note that for each $k \in \mathbb{N},$ we write $k$ as $k = 2^{k_1}k_2,$ where $k_1$ is a non-negative integer and $k_2 \in \mathbb{N}$ is odd. The set $\{\sum_{n = 1}^{\infty} c_n e_n \in H: \{c_n\}_{n \in \mathbb{N}} \in l^2(\mathbb{N}), \alpha_n^k = 1 \text{ whenever } c_n \neq 0\}$ is the linear span of $\{e_n: 2 \leq n \leq k_1\},$ (and it is the zero subspace when $k_1 = 0$). Therefore, by Proposition \ref{Characterization_of_Periodic_Points}, $P(T)$ is the
direct sum of the linear spans of $\{e_n\}_{n \geq 2},$ so the closure of $P(T)$ has codimension 1.}
$\Box$
\end{example}

Note that by using a similar construction as in the above example, for each $m \in \mathbb{N}$ one could construct a unitary operator $T \in \mathcal{B}(H)$ such that $P(T)$ is a union of countably many finite-dimensional subspaces, and the closure of $P(T)$ has codimension $m.$ (Take $\alpha_1 = \alpha_2 = ... = \alpha_m = e^{2\sqrt{2}\pi i},$ and for all $n > m,$ define $\alpha_n := e^{\frac{2\pi i}{2^n}}.$)

We proceed to give a necessary and sufficient condition for the space of all periodic points of a diagonal operator to be closed in $H.$
Recall from (\ref{rou}) that $G$ is the set of all roots of unity.

\begin{proposition}
\label{Criterion_for_the_Periodic_Points_to_be_Closed}
      Let $\{\alpha_n\}_{n \in \mathbb{N}} \subset \mathbb{C}$ be a bounded sequence, and let $T \in \mathcal{B}(H)$ be a diagonal operator with $T(e_n) = \alpha_n e_n$ for all $n \in \mathbb{N}.$ Then $P(T)$ is closed in $H$ if and only if $\{\alpha_n: n \in \mathbb{N}\} \cap G$ is finite.
\end{proposition}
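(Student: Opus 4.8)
The plan is to deduce both implications from Proposition \ref{Characterization_of_Periodic_Points}, which exhibits $P(T)$ as the increasing union over $k\in\mathbb{N}$ of the sets $V_k := \{\sum_{n=1}^\infty c_n e_n \in H : \{c_n\}_{n\in\mathbb{N}}\in l^2(\mathbb{N}),\ \alpha_n^k = 1 \text{ whenever } c_n\neq 0\}$, together with the elementary fact that norm convergence in $H$ forces coordinatewise convergence.

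For the ``if'' direction, assume $\{\alpha_n : n\in\mathbb{N}\}\cap G = \{\zeta_1,\dots,\zeta_r\}$ is finite, and let $N$ be the least common multiple of $\operatorname{order}(\zeta_1),\dots,\operatorname{order}(\zeta_r)$. First I would check that $P(T)=V_N$: any periodic point lies in some $V_k$, so whenever its $n$-th coordinate is nonzero we have $\alpha_n\in G$, hence $\alpha_n$ equals some $\zeta_i$ and therefore $\alpha_n^N=1$; the reverse inclusion $V_N\subseteq P(T)$ is immediate since $N\in\mathbb{N}$. Next I would observe that $V_N$ is precisely the closed linear span of $\{e_n : \alpha_n^N = 1\}$: membership in $V_N$ imposes only the vanishing of the coordinates $c_n$ for which $\alpha_n^N\neq 1$, and this constraint passes to $l^2$-limits because if $x^{(m)}\to x$ in $H$ then each coordinate of $x^{(m)}$ converges to the corresponding coordinate of $x$. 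Hence $P(T)=V_N$ is a closed subspace of $H$.

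For the ``only if'' direction I would argue the contrapositive: assuming $\{\alpha_n : n\in\mathbb{N}\}\cap G$ is infinite, I would produce a point of $\overline{P(T)}\setminus P(T)$. The one combinatorial input is that for every $D\in\mathbb{N}$ there are only finitely many roots of unity of order $\le D$ (there are $\sum_{d\le D}\varphi(d)$ of them), so an infinite subset of $G$ contains elements of arbitrarily large order. Using this I would pick indices $n_1<n_2<\cdots$ with $\operatorname{order}(\alpha_{n_j})$ strictly increasing, and set $x := \sum_{j=1}^{\infty} 2^{-j} e_{n_j}\in H$. Each partial sum $\sum_{j=1}^{m} 2^{-j}e_{n_j}$ lies in $P(T)$, with period exponent the lcm of $\operatorname{order}(\alpha_{n_1}),\dots,\operatorname{order}(\alpha_{n_m})$, and these partial sums converge to $x$ in $H$. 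On the other hand $x\notin P(T)$: if it were, there would be a single $k\in\mathbb{N}$ with $\alpha_{n_j}^k=1$ for all $j$ (since every coordinate $2^{-j}$ of $x$ is nonzero), i.e.\ $\operatorname{order}(\alpha_{n_j})\mid k$ for all $j$, which is impossible because the orders tend to infinity while $k$ has only finitely many divisors. Hence $P(T)$ is not closed.

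The only step that is not pure bookkeeping is the observation that infinitely many distinct roots of unity must have unbounded orders; I do not expect a genuine obstacle beyond that, though some care is warranted in keeping the chosen indices distinct and in confirming that the constructed $x$ indeed lies in $\overline{P(T)}\setminus P(T)$.
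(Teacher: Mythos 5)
Your proposal is correct and follows essentially the same route as the paper: the non-closed direction uses the identical construction $x=\sum_{j}2^{-j}e_{n_j}$ over indices whose orders strictly increase, and the closed direction identifies $P(T)$ with the single set $V_N$, which is the paper's $\ker(T^N-I)$ in different clothing. Your one addition is to make explicit the combinatorial fact that an infinite subset of $G$ must contain elements of unbounded order, which the paper uses tacitly.
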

\begin{proof}
We first assume that $\{\alpha_n: n \in \mathbb{N}\} \cap G$ is infinite. Then there exists an increasing sequence
$\{n_k\}_{k \in \mathbb{N}} \subset \mathbb{N}$ such that
\[ \alpha_{n_j} \in G \mbox{ for all } j \in \mathbb{N} \;,\;
\mbox{order}(\alpha_{n_1}) < \mbox{order} (\alpha_{n_2}) < \mbox{order}(\alpha_{n_3}) < ... .\]
Hence order$(\alpha_{n_j}) \rightarrow \infty$ as $j \rightarrow \infty$.
For each $k \in \mathbb{N},$ we define
\[ x_k := \sum_{j = 1}^{k} \frac{1}{2^j}e_{n_j} = \frac{1}{2}e_{n_1}+\frac{1}{2^2}e_{n_2}+...+\frac{1}{2^k}e_{n_k} \in H. \]
Since $\alpha_{n_1}, ..., \alpha_{n_k} \in G,$ there is a positive integer $N$ for which $\alpha_{n_j}^N = 1$ for all $j = 1, ..., k.$ It follows that
\[ T^N(x_k) = \sum_{j = 1}^{k} \frac{1}{2^j}T^N(e_{n_j}) = \sum_{j = 1}^{k} \frac{1}{2^j}\alpha_{n_j}^Ne_{n_j} = \sum_{j = 1}^{k} \frac{1}{2^j}e_{n_j} = x_k. \]
 Consequently, $x_k \in P(T)$. However, observe that $x_k \rightarrow x := \sum_{j = 1}^{\infty} \frac{1}{2^j}e_{n_j},$ and
 we claim that
\begin{equation}
x \notin P(T).
\label{buka}
\end{equation}
Assume otherwise that $x \in P(T)$,
so there is a positive integer $N$ for which $T^N(x) = x.$ Then we have
\[ \sum_{j = 1}^{\infty} \frac{1}{2^j}e_{n_j} = x = T^N(x) = T^N(\sum_{j = 1}^{\infty} \frac{1}{2^j}e_{n_j}) = \sum_{j = 1}^{\infty} \frac{1}{2^j}T^N(e_{n_j}) = \sum_{j = 1}^{\infty} \frac{1}{2^j} \alpha_{n_j}^N e_{n_j},\]
which implies that $\alpha_{n_j}^N = 1$ for all $j \in \mathbb{N}$,
 so order$(\alpha_{n_j})$ $\leq N$ for all $j \in \mathbb{N}.$ This contradicts to the fact that order$(\alpha_{n_j}) \rightarrow \infty$ as $j \rightarrow \infty.$ This proves (\ref{buka}) as claimed.
 We have shown that if $\{\alpha_n: n \in \mathbb{N}\} \cap G$ is infinite, then $P(T)$ is not closed in $H$.

Conversely, assume that the set $\{\alpha_n: n \in \mathbb{N}\} \cap G$ is finite. Let $k \in \mathbb{N}$ be the smallest positive integer  such that $\alpha_n^k = 1$ whenever $\alpha_n \in G.$ Let $I$ be the identity operator, and
%(In fact, here we just take $k \in \mathbb{N}$ to be least common multiple of elements of the set $\{\text{order}(\alpha_n): \alpha_n \in G\}.$)
we claim that
\begin{equation}
P(T) = \text{ker}(T^k-I).
\label{clam}
\end{equation}
%which implies that $P(T)$ is closed in $H.$ (Here $I \in \mathcal{B}(H)$ is the identity operator on the Hilbert space $H.$)
 Clearly $\text{ker}(T^k-I) \subset P(T)$. To prove the opposite inclusion, let $x = \sum_{n = 1}^{\infty} c_n e_n \in P(T)$, where $\{c_n\}_{n \in \mathbb{N}} \in l^2(\mathbb{N})$.
  Thus, $T^m(x) = x$ for some $m \in \mathbb{N}.$ We then have that $\sum_{n = 1}^{\infty} c_n e_n = x = T^m(x) = \sum_{n = 1}^{\infty} c_n \alpha_n^k e_n,$ so $c_n \alpha_n^m = c_n$ for all $n \in \mathbb{N}.$ Now if $n$ is such that $c_n \neq 0,$ from the above we would have that $\alpha_n \in G.$ Hence $x = \sum_{\alpha_n \in G} c_n e_n$ and $T^k(x) = \sum_{\alpha_n \in G} c_n \alpha_n^k e_n = \sum_{\alpha_n \in G} c_n e_n = x$, namely $x \in \text{ker}(T^k-I).$
  This proves (\ref{clam}) as claimed.
  It implies that $P(T)$ is closed in $H.$ We complete the proof.
\end{proof}

As a corollary of Proposition \ref{Criterion_for_the_Periodic_Points_to_be_Closed}, we derive the following criterion for determining whether $P(T)$ is the whole space $H$ or is dense in $H.$

\begin{proposition}
\label{Criterion_for_the_Periodic_Points_to_be_the_whole_or_dense}
   Let $\{\alpha_n\}_{n \in \mathbb{N}} \subset \mathbb{C}$ be a bounded sequence, and let $T \in \mathcal{B}(H)$ be a diagonal operator with $T(e_n) = \alpha_n e_n$ for all $n \in \mathbb{N}.$
    \\(1) $P(T) = H$ if and only if $\{\alpha_n\}_{n \in \mathbb{N}} \subset G$, and $\sup\{\text{order}(\alpha_n): n \in \mathbb{N}\} < \infty.$
    \\(2) If $\{\alpha_n\}_{n \in \mathbb{N}} \subset G$, and $\sup\{\text{order}(\alpha_n): n \in \mathbb{N}\} = \infty,$ then $P(T)$ is a proper dense subspace of $H.$
\end{proposition}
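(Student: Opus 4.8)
The plan is to derive both parts directly from Proposition \ref{Characterization_of_Periodic_Points} and Proposition \ref{Criterion_for_the_Periodic_Points_to_be_Closed}, which already carry the analytic content; what remains is essentially bookkeeping about orders of roots of unity.

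For part (1), I would handle the reverse implication first since it is the cleaner one. Assuming $\{\alpha_n\}_{n \in \mathbb{N}} \subset G$ with $L := \sup\{\text{order}(\alpha_n): n \in \mathbb{N}\} < \infty$, the set of distinct values among $\{\text{order}(\alpha_n): n \in \mathbb{N}\}$ is a finite subset of $\{1,\dots,L\}$, so $k := \text{lcm}$ of these values is a well-defined positive integer, and $\alpha_n^k = 1$ for every $n$. Then for any $x = \sum_n c_n e_n \in H$ the condition "$\alpha_n^k = 1$ whenever $c_n \neq 0$" holds vacuously, so Proposition \ref{Characterization_of_Periodic_Points} gives $x \in P(T)$, i.e. $P(T) = H$. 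For the forward implication, suppose $P(T) = H$. Each basis vector $e_n$ then lies in $P(T)$, so by Proposition \ref{Characterization_of_Periodic_Points} there is $k_n$ with $\alpha_n^{k_n} = 1$, hence $\alpha_n \in G$. To get the uniform bound, I would test on the single vector $x := \sum_{n=1}^\infty \tfrac{1}{n} e_n \in H$, all of whose coordinates are nonzero: since $x \in P(T)$, Proposition \ref{Characterization_of_Periodic_Points} furnishes one $k \in \mathbb{N}$ with $\alpha_n^k = 1$ for all $n$, whence $\text{order}(\alpha_n) \le k$ for all $n$ and $\sup\{\text{order}(\alpha_n)\} < \infty$.

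For part (2), I would prove density and properness separately. Density: given any finite linear combination $x = \sum_{n=1}^N c_n e_n$, put $k := \text{lcm}(\text{order}(\alpha_1),\dots,\text{order}(\alpha_N))$; since each $\alpha_n \in G$ this is finite and $T^k(x) = x$, so $x \in P(T)$. As the finite linear combinations of $\{e_n\}_{n \in \mathbb{N}}$ are dense in $H$, so is $P(T)$. Properness: because $\sup\{\text{order}(\alpha_n)\} = \infty$, no finite set can contain all the $\alpha_n$, so $\{\alpha_n : n \in \mathbb{N}\}$ — which coincides with $\{\alpha_n : n \in \mathbb{N}\} \cap G$ as every $\alpha_n$ is a root of unity — is infinite. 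Proposition \ref{Criterion_for_the_Periodic_Points_to_be_Closed} then tells us $P(T)$ is not closed in $H$, and since $H$ itself is closed we conclude $P(T) \neq H$. Hence $P(T)$ is a proper dense subspace.

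I do not expect a real obstacle; the only delicate points are the two elementary observations that a supremum of integers over a finite range is attained (so the relevant lcm is over finitely many distinct values and is finite) and, conversely, that an infinite set of roots of unity has unbounded order. As a self-contained alternative to invoking Proposition \ref{Criterion_for_the_Periodic_Points_to_be_Closed} in the properness argument, one may instead exhibit an explicit vector outside $P(T)$: choose indices $n_1 < n_2 < \cdots$ with $\text{order}(\alpha_{n_j}) \to \infty$ and set $x = \sum_j 2^{-j} e_{n_j}$, repeating the computation from the proof of Proposition \ref{Criterion_for_the_Periodic_Points_to_be_Closed} to see that $T^N(x) = x$ would force $\text{order}(\alpha_{n_j}) \le N$ for all $j$; I would include this remark for completeness.
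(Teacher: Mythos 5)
Your proposal is correct, and its overall skeleton matches the paper's: both parts are reduced to Proposition \ref{Characterization_of_Periodic_Points} and Proposition \ref{Criterion_for_the_Periodic_Points_to_be_Closed}. There is one genuine divergence worth noting. For the forward direction of (1), the paper obtains the uniform bound on $\{\text{order}(\alpha_n)\}$ by observing that $P(T)=H$ is closed and then invoking Proposition \ref{Criterion_for_the_Periodic_Points_to_be_Closed} to conclude that $\{\alpha_n\}\cap G$ is finite; you instead test membership of the single vector $x=\sum_n \tfrac{1}{n}e_n$ (all coordinates nonzero, and $\{1/n\}\in l^2(\mathbb{N})$), so that the characterization of $P(T)$ as a union over $k$ hands you one exponent $k$ with $\alpha_n^k=1$ for every $n$. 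Your route is more self-contained and slightly sharper: it avoids the closedness criterion entirely for part (1) and does not pass through finiteness of $\{\alpha_n\}\cap G$ (which is a strictly stronger conclusion than bounded order). Conversely, for properness in (2) you take a small detour through non-closedness where the paper simply cites the already-proved part (1); both are valid, and your explicit-vector alternative at the end is exactly the paper's mechanism in Proposition \ref{Criterion_for_the_Periodic_Points_to_be_Closed}. The two elementary facts you flag (an infinite set of roots of unity has unbounded order, and a bounded set of positive integers has a finite lcm over its finitely many distinct values) are indeed the only points requiring care, and you handle them correctly.
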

\begin{proof}
(1) Firstly, assume that $P(T) = H.$ Then for each $n \in \mathbb{N},$ there is a positive integer $k \in \mathbb{N}$ such that $T^k(e_n) = e_n.$ Therefore, $e_n = T^k(e_n) = \alpha_n^k e_n,$ from which we obtain $\alpha_n^k = 1$ and so $\alpha_n \in G$. Now note that $P(T) = H,$ which is a closed subspace of $H.$ Therefore, by Proposition \ref{Criterion_for_the_Periodic_Points_to_be_Closed}, $\{\alpha_n\}_{n \in \mathbb{N}} \cap G$ is finite. This implies that $\sup\{\text{order}(\alpha_n): n \in \mathbb{N}\} < \infty.$

  Conversely, assume that $\{\alpha_n\}_{n \in \mathbb{N}} \subset G$,
   and $\sup\{\text{order}(\alpha_n): n \in \mathbb{N}\} < \infty.$ In this case, one can pick a $k \in \mathbb{N}$ such that $\alpha_n^k = 1$ for all $n \in \mathbb{N}.$ It follows that $T^k$ is the identity operator, and so $P(T) = H.$
\\(2) Assume that $\{\alpha_n\}_{n \in \mathbb{N}} \subset G$,
and $\sup\{\text{order}(\alpha_n): n \in \mathbb{N}\} = \infty.$
For each $n \in \mathbb{N},$ there is a $k \in \mathbb{N}$ such that $\alpha_n^k = 1.$ Thus $T^k(e_n) = \alpha_n^k e_n = e_n,$ so $e_n \in P(T).$
Hence $P(T)$ contains the linear span of $\{e_n\}_{n \in \mathbb{N}},$ which is $\cup_{n \in \mathbb{N}} \{c_1 e_1+...+c_n e_n: c_1, ..., c_n \in \mathbb{C}\}$ and is dense in $H.$ This shows that $P(T)$ is dense in $H.$ However, observe that by (1), we have that $P(T) \neq H.$ Consequently, $P(T)$ is a proper dense subspace of $H.$ We complete the proof.
\end{proof}

By combining the above propositions, we can now prove Theorem \ref{Periodic_Points_of_a_Diagonal_Operator}.

\bigskip

{\bf Proof of Theorem \ref{Periodic_Points_of_a_Diagonal_Operator}:}
Theorem \ref{Periodic_Points_of_a_Diagonal_Operator} (1) follows from Proposition \ref{Characterization_of_Periodic_Points}. Theorem \ref{Periodic_Points_of_a_Diagonal_Operator} (2) follows from Proposition \ref{Criterion_for_the_Periodic_Points_to_be_Closed}. Theorem \ref{Periodic_Points_of_a_Diagonal_Operator} (3) and (4) follow from Proposition \ref{Criterion_for_the_Periodic_Points_to_be_the_whole_or_dense}.
$\Box$

We give a short remark about Theorem \ref{Periodic_Points_of_a_Diagonal_Operator} in the finite-dimensional setting.

\begin{remark}
{\rm An analog of Theorem \ref{Periodic_Points_of_a_Diagonal_Operator} can be given in the finite-dimensional setting. Consider $\mathbb{C}^n$ with the standard orthonormal basis $\{e_1, ..., e_n\}.$ Let $T: \mathbb{C}^n \rightarrow \mathbb{C}^n$ be a diagonal linear operator given by $T(e_k) = c_k e_k$ for all $k = 1, ..., n.$ Then we have that $P(T)$ is the linear span of $\{e_k: c_k \in G\}.$ Therefore, we have $P(T) = P(T^*).$ Since $\mathbb{C}^n$ is finite-dimensional, $P(T)$ is clearly closed in $\mathbb{C}^n.$ Also, $P(T)$ is all of $\mathbb{C}^n$ if and only if $\{c_1, ..., c_n\} \subset G$.}
$\Box$
\end{remark}

We record an interesting corollary of Theorem \ref{Periodic_Points_of_a_Diagonal_Operator}.

\begin{proposition}
    Let $T \in \mathcal{B}(H)$ be a compact normal operator. Then $P(T)$ is finite-dimensional.
\end{proposition}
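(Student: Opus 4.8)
The plan is to reduce the statement to the diagonal case and then apply Theorem \ref{Periodic_Points_of_a_Diagonal_Operator}. First I would invoke the spectral theorem for compact normal operators: since $T \in \mathcal{B}(H)$ is compact and normal, there is an orthonormal basis $\{u_n\}_{n \in \mathbb{N}}$ of $H$ consisting of eigenvectors of $T$, say $T(u_n) = \alpha_n u_n$ with $\alpha_n \in \mathbb{C}$, and moreover $\alpha_n \to 0$ as $n \to \infty$ (the nonzero eigenvalues of a compact operator can only accumulate at $0$, and each nonzero eigenvalue has finite multiplicity). Relabeling so that $\{u_n\}$ plays the role of the fixed orthonormal basis, $T$ is then a diagonal operator in the sense of Section 3 with diagonal entries $\{\alpha_n\}$.

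Next I would apply part (1) of Theorem \ref{Periodic_Points_of_a_Diagonal_Operator} (equivalently Proposition \ref{Characterization_of_Periodic_Points}): $P(T) = \cup_{k \in \mathbb{N}} \{\sum_{n} c_n u_n \in H : \{c_n\} \in l^2(\mathbb{N}),\ \alpha_n^k = 1 \text{ whenever } c_n \neq 0\}$. The key observation is that, because $\alpha_n \to 0$, only finitely many $\alpha_n$ can be roots of unity: any root of unity has modulus $1$, so if infinitely many $\alpha_n$ lay in $G$ we would have infinitely many terms of the sequence with modulus $1$, contradicting $\alpha_n \to 0$. Hence the set $\{\alpha_n : n \in \mathbb{N}\} \cap G$ is finite, and in fact $J := \{n \in \mathbb{N} : \alpha_n \in G\}$ is a finite subset of $\mathbb{N}$ (each nonzero eigenvalue, in particular each root of unity among the $\alpha_n$, has finite multiplicity).

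Then for any $x = \sum_n c_n u_n \in P(T)$, the characterization forces $c_n = 0$ for every $n \notin J$, so $P(T) \subseteq \operatorname{span}\{u_n : n \in J\}$, which is finite-dimensional; the reverse inclusion is immediate since each $u_n$ with $n \in J$ satisfies $T^{k}(u_n) = u_n$ for a suitable $k$. Therefore $P(T) = \operatorname{span}\{u_n : n \in J\}$ is finite-dimensional. (Alternatively, one could cite part (2) of Theorem \ref{Periodic_Points_of_a_Diagonal_Operator} to conclude $P(T)$ is closed, hence equals $\ker(T^k - I)$ for the least common $k$, and this kernel is finite-dimensional because $T^k - I$ is a compact-plus-$(-I)$ operator — but the direct argument above is cleaner.)

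I do not anticipate a serious obstacle here; the only point requiring a little care is the bookkeeping that "finitely many distinct root-of-unity eigenvalues" together with "finite multiplicity of each nonzero eigenvalue" yields a genuinely finite-dimensional span, rather than merely a span over a set of finitely many distinct values. The rest is a direct application of the already-established diagonal theory, with the compactness hypothesis entering solely through $\alpha_n \to 0$ and finite multiplicities.
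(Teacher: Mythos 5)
Your proposal is correct and follows essentially the same route as the paper: diagonalize the compact normal operator via the spectral theorem, use $\alpha_n \to 0$ to conclude that only finitely many indices $n$ have $\alpha_n$ a root of unity, and then apply the characterization of $P(T)$ for diagonal operators. Your extra care in distinguishing ``finitely many distinct root-of-unity eigenvalues'' from ``finitely many indices $n$ with $\alpha_n \in G$'' is a welcome refinement of a point the paper passes over quickly, but it does not change the argument.
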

\begin{proof}
    Since $T$ is a compact normal operator, there exists an orthonormal basis $\{u_n\}_{n \in \mathbb{N}}$ for which $T$ is diagonal with respect to $\{u_n\}_{n \in \mathbb{N}}.$ (See Theorem 2.4.4 in p.55 in \cite{Gerard_J_Murphy}.) Assume that $T(u_n) = c_n u_n$ for all $n \in \mathbb{N}.$ By the compactness of $T,$ it follows that $c_n \rightarrow 0$ as $n \rightarrow \infty.$ Hence, there are only finitely many $n$ for which $c_n$ is a root of unity. By Theorem \ref{Periodic_Points_of_a_Diagonal_Operator} (1), it follows that the space $P(T)$ is finite-dimensional. We complete the proof.
\end{proof}

Now we examine some examples. As an easy example of Proposition \ref{Criterion_for_the_Periodic_Points_to_be_the_whole_or_dense}(1), we consider the following. Fix a positive integer $k \in \mathbb{N}.$ For each $n \in \mathbb{N},$ we denote by $n (\mbox{mod } k)$ to be the integer $r \in \{0,1,...,k-1\}$ with the property that $k | (n-r).$ Set $\alpha_n := \exp(2\pi i n (\mbox{mod } k)/k)$ for all $n \in \mathbb{N},$ and let $T \in \mathcal{B}(H)$ be the diagonal operator with $T(e_n) = \alpha_n e_n$ for all $n \in \mathbb{N}.$ Therefore, each $\alpha_n$ is a root of unity and $\sup\{\text{order}(\alpha_n): n \in \mathbb{N}\} < \infty.$ By Proposition \ref{Criterion_for_the_Periodic_Points_to_be_the_whole_or_dense}(1), we have $P(T) = H.$
Now we consider some standard examples of Proposition \ref{Criterion_for_the_Periodic_Points_to_be_the_whole_or_dense}(2).

\begin{example}
\label{standard_example_in_section2}
 {\rm For each $n \in \mathbb{N},$ we let $\xi_n := e^{\frac{2\pi i}{n}} \in \mathbb{C}.$ Let $T \in \mathcal{B}(H)$ be the diagonal operator with $T(e_n) = \xi_n e_n$ for all $n \in \mathbb{N}.$ Hence, each $\xi_n$ is a root of unity, and order$(\xi_n) = n \rightarrow \infty$ as $n \rightarrow \infty.$ By Proposition \ref{Criterion_for_the_Periodic_Points_to_be_the_whole_or_dense}(2), $P(T)$ is a proper dense subspace of $H.$ Here $P(T) = \cup_{n \in \mathbb{N}} \{c_1 e_1+...+c_n e_n: c_1, ..., c_n \in \mathbb{C}\}.$ Hence, $\{e_n : n \in \mathbb{N}\}$ is a Hamel basis of $P(T).$}
$\Box$
\end{example}

We now consider another interesting example.

\begin{example}
\label{interesting_example_in_section2}
{\rm We construct an example of a unitary diagonal operator $T \in \mathcal{B}(H)$ such that $\sigma(T) = S^1 := \{z \in \mathbb{C}: |z| = 1\}$, and such that the space $P(T)$ is a proper dense subspace of $H.$ Let $\{\beta_n\}_{n \in \mathbb{N}}$ be an enumeration of the countable set $G$ so that it is dense in $S^1.$ Let $T \in \mathcal{B}(H)$ be the diagonal operator with $T(e_n) = \beta_n e_n$ for all $n \in \mathbb{N}.$ Then $\sigma(T)$ is the closure of $\{\beta_n\}_{n \in \mathbb{N}}$, which is the whole $S^1.$ Also, it is readily checked that for each $n \in \mathbb{N},$ we have $T^*(e_n) = \bar \beta_n e_n = \beta_n^{-1} e_n = T^{-1}(e_n).$ Therefore, $T$ is a unitary operator. Finally, note that each $\beta_n$ is a root of unity, and $\sup\{\text{order}(\beta_n): n \in \mathbb{N}\} = \infty.$ Thus, by Proposition \ref{Criterion_for_the_Periodic_Points_to_be_the_whole_or_dense}(2), it follows that $P(T)$ is a proper dense subspace of $H.$ As in Example \ref{standard_example_in_section2}, we have that $\{e_n : n \in \mathbb{N}\}$ is a Hamel basis of $P(T).$}
$\Box$
\end{example}

\section{Periodic Points of a Permutation Operator}
In this section, we prove Theorem \ref{Periodic_Points_of_a_Permutation_Operator} and give some related examples. Most of the ideas are similar to those in the first section.

Let $\sigma: \mathbb{N} \rightarrow \mathbb{N}$ be a bijection. It induces an equivalence relation on $\mathbb{N}$,
given by $n$ is equivalent to $m$ if and only if $m = \sigma^k(n)$ for some $k \in \mathbb{Z}.$
For each $m \in \mathbb{N},$ we denote by $[m]$ the equivalence class of $m.$
We define $T : = T_{\sigma} \in \mathcal{B}(H)$ by setting $T_{\sigma}(e_n) = e_{\sigma(n)}$ for all $n \in \mathbb{N}.$
We keep the same meanings for the notations $\sigma$, $T: = T_\sigma$ and $[m]$ throughout this section.
Recall that $P(T)$ denotes the periodic points of $T$.

\begin{theorem}
\label{Characterization_of_Periodic_Points_for_a_Permutation_Operator}
     We have $\cup_{k \in \mathbb{N}} \{\sum_{n = 1}^{\infty} c_n e_n \in H: \text{card}([n]) \leq k \text{ whenever } c_n \neq 0\} \subset P(T)$, and the inclusion can be proper. Also, $P(T) = P(T^*).$
\end{theorem}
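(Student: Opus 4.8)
The plan is to establish the three assertions in order: the inclusion, the properness of the inclusion via an explicit example, and the adjoint equality.

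For the inclusion, I would take $x = \sum_{n=1}^\infty c_n e_n \in H$ with the property that there is a fixed $k \in \mathbb{N}$ such that $\mathrm{card}([n]) \le k$ whenever $c_n \ne 0$. Let $N := \mathrm{lcm}\{\,\mathrm{card}([n]) : c_n \ne 0, [n]\text{ finite}\,\}$; since every such cardinality divides some integer $\le k$, one has $N \le k!$ (or just $N \mid k!$), so $N$ is a well-defined positive integer. The key observation is that $T^N(e_n) = e_{\sigma^N(n)} = e_n$ for every $n$ with $c_n \ne 0$, because $\sigma$ restricted to the finite cycle $[n]$ has order $\mathrm{card}([n])$, which divides $N$. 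Since $T$ is bounded (unitary, in fact) and the partial sums of $x$ converge, $T^N$ commutes with the limit, giving $T^N(x) = \sum_{c_n \ne 0} c_n T^N(e_n) = \sum_{c_n \ne 0} c_n e_n = x$. Hence $x \in P(T)$. This is the routine part, exactly parallel to the forward direction of Proposition \ref{Characterization_of_Periodic_Points}.

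For the properness, I would exhibit a $\sigma$ with at least one infinite equivalence class together with a periodic point supported on that class. The simplest choice: let $\sigma$ act as the shift $n \mapsto n+1$ on the infinite chain $\{1,2,3,\dots\}$ — but wait, that has no finite cycles and no nonzero periodic points on that chain, so instead I would build $\sigma$ so that $\mathbb{N}$ is partitioned into finite cycles of unbounded size, say a $2$-cycle, a $3$-cycle, a $4$-cycle, and so on. Then $e_1$ lies in a $2$-cycle, hence is periodic, and more generally the vector $x = \sum_j 2^{-j} v_j$, where $v_j$ is a suitably normalized eigenvector-type combination over the $j$-th cycle, is periodic under no single fixed power yet still lies in $P(T)$ — actually, the cleanest route: take the vector $x = \sum_{m} c_m e_m$ where the sum ranges over one representative from each finite cycle and $c_m$ decays fast; if $m$ is the chosen representative of a $j$-cycle, then $T^{j!}$ fixes it only if $j \le $ something, so this $x$ is NOT in the left-hand union (unbounded cycle sizes among the support) but, by choosing $x$ to be a genuine fixed point — e.g. $x = \sum_j 2^{-j}(e_{a_j} + e_{\sigma(a_j)} + \dots)$ summed over a full cycle — one gets $T x = x$ directly, so $x \in P(T)$ while $\mathrm{card}([n])$ is unbounded on the support. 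I would write this out concretely with cycles $C_2, C_3, C_4, \dots$ and the vector $x = \sum_{j \ge 2} 2^{-j}\bigl(\sum_{n \in C_j} e_n\bigr)$, checking $Tx = x$ and that the support meets cycles of every size, so $x$ lies in no member of the union. The main obstacle is getting this example both correct and economical: one must ensure $x \in H$ (square-summability — fine, since $\|\sum_{n\in C_j} e_n\| = \sqrt{j}$ and $\sum_j 2^{-2j} j < \infty$) and genuinely outside every set in the left-hand family.

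For $P(T) = P(T^*)$, recall $T^* = T_\sigma^{-1} = T_{\sigma^{-1}}$. The equivalence relation induced by $\sigma^{-1}$ coincides with that induced by $\sigma$ (since $m = \sigma^k(n) \iff n = \sigma^{-k}(m)$), so the equivalence classes, and in particular their cardinalities, are identical. Moreover, for any $x = \sum c_n e_n$ and any $N$, we have $T^N(x) = x \iff \sigma^N$ fixes every $n$ in the support of $x$ $\iff (\sigma^{-1})^N$ fixes every such $n$ $\iff (T^*)^N(x) = x$. Thus $x \in P(T)$ with period dividing $N$ exactly when $x \in P(T^*)$ with period dividing $N$, so $P(T) = P(T^*)$. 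This last part is short and should be dispatched in a sentence or two.
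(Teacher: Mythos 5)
Your inclusion argument and your properness example are both sound and follow the same strategy as the paper: for the inclusion, pick a common multiple of the (finite, bounded) cycle lengths meeting the support; for properness, exhibit a vector fixed by a fixed power of $T$ whose support meets cycles of unbounded size. The paper's example uses cycles of sizes $1,2,4,8,\dots$ and a vector with equal coefficients on every other point of each cycle, satisfying $T^2x=x$; yours uses cycles of all sizes and a vector constant on each full cycle, satisfying $Tx=x$. Both are correct, and the square-summability check $\sum_j 4^{-j}j<\infty$ is right.

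However, your argument for $P(T)=P(T^*)$ contains a false intermediate step. You assert that $T^N(x)=x$ if and only if $\sigma^N$ fixes every $n$ in the support of $x$. This is contradicted by the example you constructed two paragraphs earlier: there $Tx=x$ while $\sigma$ fixes no point of the support at all. The correct condition is that $T^N(x)=x$ if and only if the coefficient function $n\mapsto c_n$ is constant on the orbits of $\sigma^N$, i.e. $c_{\sigma^N(n)}=c_n$ for all $n$; periodicity of $x$ does not force periodicity of the individual indices in its support. The conclusion $P(T)=P(T^*)$ is nonetheless easy to salvage, and the paper's route is the cleanest: $T$ is unitary with $T^*=T^{-1}=T_{\sigma^{-1}}$, and applying the invertible operator $T^{-N}$ to the identity $T^N(x)=x$ gives $(T^*)^N(x)=x$ and conversely, so $P(T)=P(T^{-1})=P(T^*)$ with no reference to the supports. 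Replace your chain of equivalences with this one-line argument (or with the symmetric coefficient condition above) and the proof is complete.
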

\begin{proof}
Let $x = \sum_{n = 1}^{\infty} c_n e_n$, where $\{\text{card}([n]): c_n \neq 0\}$ is bounded above. (Here $\{c_n\}_{n \in \mathbb{N}}$ is a square summable sequence.)
%Firstly, we show that $P(T)$ contains $\cup_{k \in \mathbb{N}} \{x = \Sigma_{n = 1}^{\infty} c_n e_n \in H: \text{card}([n]) \leq k \\\text{ whenever } c_n \neq 0.\}$
 We pick $M \in \mathbb{N}$ such that card$([n])$ divides $M$ whenever $c_n \neq 0.$ Then
 \[ x = \sum_{n = 1}^{\infty} c_n e_n = \sum_{c_n \neq 0} c_n e_n = \sum_{c_n \neq 0} c_n e_{\sigma^M(n)} = T^M(\sum_{c_n \neq 0} c_n e_n) = T^M(\sum_{n = 1}^{\infty} c_n e_n) = T^M(x). \]
 This implies that $x \in P(T).$

To see that the above inclusion can be proper, we consider an explicit example. Define a bijection $\sigma: \mathbb{N} \rightarrow \mathbb{N}$ by
\[
\begin{array}{l}
 \sigma(1) = 1 \;,\; \sigma(2^k-1) = 2^{k-1} \mbox{ for all } 2 \leq k \in \mathbb{N} \;, \\
\sigma(n) = n+1 \mbox{ for remaining positive integers}.
\end{array}
\]
 That is, $\sigma$ has the cycle decomposition $\sigma = (1)(2 3)(4 5 6 7)(...).$
Consider
\[ x := \sum_{k = 1}^{\infty}\frac{1}{2^{k^2}}\sum_{j = 0}^{2^{k-1}-1}e_{2^k+2j} = \frac{1}{2^1}e_2 + \frac{1}{2^4}(e_4 + e_6) + \frac{1}{2^9}(e_8 + e_{10}+e_{12}+e_{14})+... \in H.\]
 We have $T^2(x) = x,$ so $x \in P(T)$ whereas
 \[ x \notin \cup_{k \in \mathbb{N}} \{\sum_{n = 1}^{\infty} c_n e_n \in H: \text{card}([n]) \leq k \text{ whenever } c_n \neq 0\}.\]
  This shows that the inclusion can be proper in general.

Finally, to see that $P(T) = P(T^*),$ note that $T$ is a unitary operator with $T^* = T^{-1} = T_{\sigma^{-1}}.$ Thus, $P(T) = P(T^{-1}) = P(T^*).$ We complete the proof.
\end{proof}

The following result gives a criterion for when the space $P(T)$ of periodic points is closed in $H.$ The proof of the result parallels Proposition \ref{Criterion_for_the_Periodic_Points_to_be_Closed} in the previous section.
Let card$(X)$ denotes the cardinality of the set $X.$

\begin{proposition}
\label{Criterion_for_the_Periodic_Points_to_be_Closed_for_a_Permutation_Operator}
     We have $P(T)$ is closed in $H$ if and only if $\sup\{\text{card}([m]):m \in \mathbb{N}, [m] \text{ is finite}\} < \infty.$
\end{proposition}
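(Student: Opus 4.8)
The plan is to mirror the structure of the proof of Proposition \ref{Criterion_for_the_Periodic_Points_to_be_Closed}, replacing the role of $\mathrm{order}(\alpha_n)$ by $\mathrm{card}([n])$. Write $S := \sup\{\mathrm{card}([m]): m \in \mathbb{N},\ [m]\text{ is finite}\}$.

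First I would prove the ``if'' direction: assume $S < \infty$. The claim is that then $P(T) = \ker(T^{S!}-I)$ (or $\ker(T^N - I)$ where $N$ is the l.c.m.\ of all finite cardinalities of equivalence classes; $N \le S!$ works as a crude bound). The inclusion $\ker(T^N-I)\subset P(T)$ is trivial. For the reverse, take $x=\sum c_n e_n \in P(T)$, so $T^m(x)=x$ for some $m\in\mathbb{N}$. Since $T^m(x)=\sum c_n e_{\sigma^m(n)}$, comparing the $e_n$-coefficient of $x$ and $T^m(x)$ forces: whenever $c_n\neq 0$, one must have $c_n = c_{\sigma^{-m}(n)}$, and iterating, $c_n = c_{\sigma^{jm}(n)}$ for all $j$, which (since $\{c_n\}\in\ell^2$) forces $[n]$ to be finite with $\mathrm{card}([n])\mid m$. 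Hence every $n$ with $c_n\neq 0$ lies in a finite equivalence class, and then $\mathrm{card}([n])\le S$, so $\mathrm{card}([n])\mid N$, giving $T^N(x) = \sum c_n e_{\sigma^N(n)} = \sum c_n e_n = x$. Thus $P(T)=\ker(T^N-I)$ is closed.

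Next I would prove the ``only if'' direction by contraposition: assume $S=\infty$. Then there is a sequence of distinct finite equivalence classes $[m_k]$ with $\mathrm{card}([m_1])<\mathrm{card}([m_2])<\cdots\to\infty$; WLOG (passing to a subsequence) these classes are pairwise disjoint (distinct equivalence classes are automatically disjoint). Pick a representative $m_k$ from each and set
\[
x_k := \sum_{j=1}^{k}\frac{1}{2^j}\,e_{m_j},\qquad x := \sum_{j=1}^{\infty}\frac{1}{2^j}\,e_{m_j}\in H.
\]
Then $x_k\to x$ in $H$. For each fixed $k$, letting $N$ be the l.c.m.\ of $\mathrm{card}([m_1]),\dots,\mathrm{card}([m_k])$ we get $T^N(e_{m_j})=e_{\sigma^N(m_j)}=e_{m_j}$ for $j\le k$, so $T^N(x_k)=x_k$ and $x_k\in P(T)$. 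But $x\notin P(T)$: if $T^N(x)=x$ for some $N$, then as above comparing coefficients forces $\mathrm{card}([m_j])\mid N$ for all $j$, contradicting $\mathrm{card}([m_j])\to\infty$. Hence $P(T)$ is not closed.

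The main obstacle, and the only real point of care beyond transcribing the diagonal-operator argument, is the coefficient-comparison step: because $T$ permutes the basis vectors rather than scaling them, the condition $T^m(x)=x$ does not decouple coordinatewise but instead couples coordinates along $\sigma$-orbits, so one must argue that the coefficients are constant along each orbit meeting the support of $x$, and then invoke square-summability to conclude those orbits are finite of bounded size. I would isolate this as a short preliminary observation (``if $x=\sum c_n e_n\in P(T)$ then for every $n$ with $c_n\ne 0$, $[n]$ is finite and $c$ is constant on $[n]$'') and use it in both directions.
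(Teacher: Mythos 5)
Your proposal follows the same route as the paper's proof: for the ``if'' direction you identify $P(T)$ with $\ker(T^{N}-I)$ for a suitable common multiple $N$ of the finite orbit sizes, using square-summability to kill coefficients supported on infinite orbits; for the ``only if'' direction you exhibit the same sequence $x_k=\sum_{j\le k}2^{-j}e_{m_j}$ of periodic points converging to a non-periodic point. The argument is essentially correct, but two of your intermediate assertions are false as stated, and one of them is actually invoked. First, $T^m(x)=x$ with $c_n\neq 0$ does \emph{not} force $\mathrm{card}([n])\mid m$: for $\sigma=(1\,2)$ and $x=e_1+e_2$ one has $T(x)=x$ while $\mathrm{card}([1])=2\nmid 1$. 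What coefficient comparison actually yields is $c_k=c_{\sigma^{-m}(k)}$ for every $k$, i.e.\ $c$ is constant on each orbit of $\sigma^{m}$ --- not on each orbit of $\sigma$, so your proposed preliminary observation that ``$c$ is constant on $[n]$'' also fails (take $x=e_1$ with the same $\sigma$: $T^2x=x$ but $c_1=1$, $c_2=0$). This weaker, correct statement still suffices for the ``if'' direction, since every $\sigma^{m}$-orbit inside an infinite $\sigma$-orbit is infinite, so square-summability forces $c_n=0$ there, and $T^{N}$ fixes everything that remains. In the ``only if'' direction, however, you deduce $\mathrm{card}([m_j])\mid N$ ``as above'', i.e.\ from the false general claim; the correct justification needs one extra line specific to your $x$: within $[m_j]$ only the index $m_j$ carries a nonzero coefficient, so the identity $c_{m_j}=c_{\sigma^{-N}(m_j)}$ forces $\sigma^{N}(m_j)=m_j$ and hence $\mathrm{card}([m_j])\mid N$ (this is exactly how the paper compares the two sides of its displayed equation). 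With that one-line repair your proof coincides with the paper's.
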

\begin{proof}
Firstly, assume that $\sup\{\text{card}([m]):m \in \mathbb{N}, [m] \text{ is finite}\} = \infty.$ Then we can pick an increasing sequence
of positive integers,
\[ \{n_k\}_{k \in \mathbb{N}} \subset \mathbb{N}  \;,\;
 [n_k] \mbox{ is finite and } \mbox{card}([n_1]) < \mbox{card}([n_2]) < .... \]
  Thus card$([n_k]) \rightarrow \infty$ as $k \rightarrow \infty.$ For each $k \in \mathbb{N},$ we take
  \[ x_k := \sum_{j = 1}^{k} \frac{1}{2^j}e_{n_j} = e_{n_1} + \frac{1}{2}e_{n_2} + ... +\frac{1}{2^k}e_{n_k} \in P(T). \]
  To see that $x_k \in P(T)$, let $M = \prod_{i=1}^k \mbox{card}([n_i])$ and we have
 \[ T^M(x_k) = T^M(\sum_{j = 1}^{k} \frac{1}{2^j}e_{n_j}) = \sum_{j = 1}^{k} \frac{1}{2^j} T^M(e_{n_j}) = \sum_{j = 1}^{k} \frac{1}{2^j} e_{n_j} = x_k .\]

 We claim that
 \begin{equation}
 x := \sum_{j = 1}^{\infty} \frac{1}{2^j}e_{n_j} \notin P(T).
 \label{mee}
 \end{equation}
Assume otherwise, $x \in P(T),$ so there exists $N \in \mathbb{N}$ such that $T^N(x) = x.$ Then
\begin{equation}
 \sum_{j = 1}^{\infty} \frac{1}{2^j}e_{n_j} = x = T^N(x) = T^N(\sum_{j = 1}^{\infty} \frac{1}{2^j}e_{n_j}) = \sum_{j = 1}^{\infty} \frac{1}{2^j} T^N(e_{n_j}) = \sum_{j = 1}^{\infty} \frac{1}{2^j} e_{\sigma^N(n_j)}.
 \label{bop}
 \end{equation}
Compare both sides of (\ref{bop}), and we obtain that $\sigma^N(n_j) = n_j$ for all $j \in \mathbb{N}.$ Therefore, $N \geq $ card$([n_j])$ for all $j \in \mathbb{N}.$ This contradicts the fact that card$([n_k]) \rightarrow \infty$ as $k \rightarrow \infty$,
which proves (\ref{mee}) as claimed.

 We have shown that $x_k \in P(T)$ and
 $x_k \rightarrow x \notin P(T)$, so $P(T)$ is not closed in $H.$

Conversely, assume that $\sup\{\text{card}([m]):m \in \mathbb{N}, [m] \text{ is finite}\} < \infty.$ Then we can pick a smallest positive integer $M \in \mathbb{N}$ with the property that card$([n])$ divides $M$ whenever card$([n])$ is finite.
%(We just take $M$ to be the least common multiple of the set $\{\text{card}([m]):m \in \mathbb{N}, [m] \text{ is finite.}\}$)
Let $I$ be the identity operator, and we claim that
\begin{equation}
P(T) = \ker (T^M-I) .
\label{wan}
\end{equation}
We clearly have $\ker(T^M-I) \subset P(T).$ Conversely, assume that $x = \sum_{j = 1}^{\infty} c_j e_j \in P(T).$ (Here $\{c_j\}_{j \in \mathbb{N}}$ is square summable.) Then $T^N(x) = x$ for some $N \in \mathbb{N}.$ Now if $j$ is such that $[j]$ is infinite, then for any $s \in \mathbb{N}$ we have $\sum_{k = 1}^{\infty} c_k e_k = x = T^{sN}(x) = \sum_{k = 1}^{\infty} c_k e_{\sigma^{sN}(k)}.$ This implies that $c_j = c_{\sigma^{sN}(j)}$ for all $s \in \mathbb{N}.$ Combine this with the fact that $[j] = \{\sigma^{k}(j): k \in \mathbb{Z}\}$ is infinite and that $\{c_k\}_{k \in \mathbb{N}}$ is square summable, and we get that $c_j = 0.$ The above argument implies that $[j]$ is finite for all $j$ with $c_j \neq 0.$ Therefore, $x = \sum_{j = 1}^{\infty} c_j e_j = \sum_{[j] \text{ finite}}^{\infty} c_j e_j = \sum_{[j] \text{ finite}}^{\infty} c_j e_{\sigma^{M}(e_j)} = T^M(x).$ This shows that $x \in \text{ker}(T^M-I).$ We have shown that $P(T) \subset \text{ker}(T^M-I)$.
This proves (\ref{wan}) as claimed. Since $\ker (T^M-I)$ is closed, this completes the proof.
\end{proof}

As a corollary of Proposition \ref{Criterion_for_the_Periodic_Points_to_be_Closed_for_a_Permutation_Operator}, we derive the following criterion for determining if the space $P(T)$ of all periodic points of $T \in \mathcal{B}(H)$ is the whole space $H$ or is dense in $H.$
We consider each equivalence class $[m]$ for $m \in \mathbb{N}$.

\begin{proposition}
\label{Criterion_for_the_Periodic_Points_to_be_the_whole_or_dense_for_a_Permutation_Operator}
$\,$
    \\(1) $P(T) = H$ if and only if $\sup\{\text{card}([m]): m \in \mathbb{N}\} < \infty.$
    \\(2) If $\sup\{\text{card}([m]): m \in \mathbb{N}\} = \infty$ and each equivalence class $[m]$ is finite, then $P(T)$ is a proper dense subspace of $H.$
    \\(3) If the closure of $P(T)$ is finite-codimensional in $H,$ then $P(T)$ is dense in $H,$ so in fact the closure of $P(T)$ is the whole space $H.$
\end{proposition}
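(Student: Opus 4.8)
The plan is to handle parts (1) and (2) along the same lines as the diagonal analogue, Proposition~\ref{Criterion_for_the_Periodic_Points_to_be_the_whole_or_dense}, with Proposition~\ref{Criterion_for_the_Periodic_Points_to_be_Closed_for_a_Permutation_Operator} playing the role of the ``closedness'' criterion, and then to deduce part (3) from one extra observation that is already implicit in the proof of Proposition~\ref{Criterion_for_the_Periodic_Points_to_be_Closed_for_a_Permutation_Operator}. For part (1), if $\sup\{\text{card}([m]):m\in\mathbb{N}\}=N<\infty$ I would set $M=\mathrm{lcm}(1,2,\dots,N)$; every orbit then has size dividing $M$, so $\sigma^M(n)=n$ for all $n$, whence $T^M=I$ and $P(T)=H$. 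Conversely, if $P(T)=H$ then $P(T)$ is closed, so Proposition~\ref{Criterion_for_the_Periodic_Points_to_be_Closed_for_a_Permutation_Operator} gives $\sup\{\text{card}([m]):[m]\text{ finite}\}<\infty$, while $e_n\in P(T)$ for each $n$ yields a $k$ with $\sigma^k(n)=n$, forcing every $[n]$ to be finite; together these give the finite supremum. For part (2), each class being finite means $T^{\text{card}([n])}(e_n)=e_n$, so every $e_n$ lies in $P(T)$; hence $P(T)$ contains the linear span of $\{e_n\}_{n\in\mathbb{N}}$, which is dense in $H$, while part (1) shows $P(T)\neq H$ since the supremum is infinite. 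So $P(T)$ is a proper dense subspace.

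For part (3), I would introduce $H_{\mathrm{fin}}:=\overline{\mathrm{span}}\{e_n:[n]\text{ finite}\}$ and $H_{\infty}:=\overline{\mathrm{span}}\{e_n:[n]\text{ infinite}\}$, so that $H=H_{\mathrm{fin}}\oplus H_{\infty}$ orthogonally. The key point, which I would extract from the proof of Proposition~\ref{Criterion_for_the_Periodic_Points_to_be_Closed_for_a_Permutation_Operator} (or re-derive in one line: if $T^N x=x$ with $x=\sum_j c_j e_j$ then $c_j=c_{\sigma^{sN}(j)}$ for all $s\in\mathbb{N}$, and for $[j]$ infinite the indices $\sigma^{sN}(j)$ run through infinitely many distinct values, so square-summability forces $c_j=0$), is that $P(T)\subseteq H_{\mathrm{fin}}$, and hence $\overline{P(T)}\subseteq H_{\mathrm{fin}}$. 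Since $\overline{P(T)}\subseteq H_{\mathrm{fin}}$ implies $H\ominus H_{\mathrm{fin}}\subseteq H\ominus\overline{P(T)}$, we get $\dim(H\ominus H_{\mathrm{fin}})\le\dim(H\ominus\overline{P(T)})<\infty$; but $H\ominus H_{\mathrm{fin}}=H_{\infty}$, and a single infinite equivalence class would already contribute an infinite orthonormal set to $H_{\infty}$. Therefore there are no infinite classes, every $[m]$ is finite, and then (exactly as in part (2)) $P(T)$ contains the dense span of $\{e_n\}_{n\in\mathbb{N}}$; so $P(T)$ is dense and $\overline{P(T)}=H$.

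I do not expect a genuine obstacle here. The one step needing care is the bookkeeping of the inclusion $P(T)\subseteq H_{\mathrm{fin}}$ together with the elementary fact that finite codimension passes to larger closed subspaces, and then chaining parts (1) and (3) correctly at the end so that the ``finite codimension'' hypothesis collapses all the way to codimension zero. The $\mathrm{lcm}$ bound, the density of the span of $\{e_n\}$, and the orthogonal splitting $H=H_{\mathrm{fin}}\oplus H_{\infty}$ are all routine.
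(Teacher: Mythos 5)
Your proposal is correct and follows essentially the same route as the paper: parts (1) and (2) are argued identically (lcm of orbit sizes gives $T^M=I$; $e_n\in P(T)$ forces finite orbits; density of the span of $\{e_n\}$), and for part (3) your key step --- that square-summability together with $c_j=c_{\sigma^{sN}(j)}$ kills all coefficients on infinite orbits --- is exactly the paper's argument, merely repackaged as the inclusion $P(T)\subseteq H_{\mathrm{fin}}$ and a dimension count on $H_\infty$ rather than the paper's contradiction obtained from a single infinite class $[m]$ whose complementary closed span has infinite codimension. No gaps.
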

\begin{proof}
(1) Firstly, assume that $P(T) = H.$ Then for each $n \in \mathbb{N},$ there is a $k \in \mathbb{N}$ such that $T^k(e_n) = e_n,$ so $e_n = e_{\sigma^k(n)},$ which implies that $n = \sigma^k(n).$ This shows that $[m]$ is finite for all $m$. Now note that $P(T) = H,$ which is closed in $H,$ so by Proposition \ref{Criterion_for_the_Periodic_Points_to_be_Closed_for_a_Permutation_Operator}, it follows that $\sup\{\text{card}([m]): m \in \mathbb{N}\} < \infty.$

Conversely, assume that $\sup\{\text{card}([m]): m \in \mathbb{N}\} < \infty.$ We take a positive integer $M \in \mathbb{N}$ such that each $\mbox{card}([m])$ divides $M.$ Then $T^M =I$, whence $P(T) = H.$
\\(2) Assume that $\sup\{\text{card}([m]): m \in \mathbb{N}\} = \infty.$
For each $n \in \mathbb{N}$, we let $k = \mbox{card}([n]) < \infty$ and see that
 $T^k(e_n) = e_{\sigma^k(n)} = e_n$, so $e_n \in P(T)$. Hence $P(T)$ contains the linear span $\cup_{n \in \mathbb{N}} \{c_1 e_1+...+c_n e_n: c_1, ..., c_n \in \mathbb{C}\}$ and is dense in $H.$ However, by (1), we have that $P(T) \neq H.$ So $P(T)$ is a proper dense subspace of $H.$ We complete the proof.
\\(3) We first claim that in this case,
\begin{equation}
\label{each_equivalence_class_is_finite}
[m] \text{ is finite for all } m \in \mathbb{N}.
\end{equation}
To see \eqref{each_equivalence_class_is_finite}, assume by contradiction that there exists $m \in \mathbb{N}$ for which $[m]$ is infinite. Pick $x \in P(T)$ with $T^M(x) = x$ and $x = \sum_{k = 1}^{\infty} c_k e_k$, where $\{c_k\}_{k \in \mathbb{N}}$ is a square-summable sequence.
Let $k \in [m].$ For any $s \in \mathbb{N},$ one has $\sum_{n = 1}^{\infty} c_n e_n = x = T^{sM}(x) = \sum_{n = 1}^{\infty} c_n e_{\sigma^{sM}(n)}.$ So $c_k = c_{\sigma^{sM}(k)}.$ Since $\{c_n\}_{n \in \mathbb{N}}$ is square summable, it implies that $c_k = 0.$
We have shown that $c_k = 0$ whenever $k \in [m]$,
so $x$ is in the closed linear span of $\{e_n : n \notin [m]\}.$ Therefore, $P(T)$ is a subspace of the closed linear span of $\{e_n : n \notin [m]\}.$ This contradicts to the fact that the closed linear span of $\{e_n : n \notin [m]\}$ is of infinite-codimension in $H.$ This proves (\ref{each_equivalence_class_is_finite}) as claimed.

Now, combine \eqref{each_equivalence_class_is_finite} and Theorem \ref{Criterion_for_the_Periodic_Points_to_be_the_whole_or_dense_for_a_Permutation_Operator} (1)(2), and we see that $P(T)$ is dense in $H.$ We complete the proof.
\end{proof}

By combining the above Propositions, we can now prove Theorem  \ref{Periodic_Points_of_a_Permutation_Operator}.
\\{\bf Proof of Theorem  \ref{Periodic_Points_of_a_Permutation_Operator}:}
Theorem  \ref{Periodic_Points_of_a_Permutation_Operator} (1) follows from Proposition \ref{Characterization_of_Periodic_Points_for_a_Permutation_Operator}. Theorem  \ref{Periodic_Points_of_a_Permutation_Operator} (2) follows from Proposition \ref{Criterion_for_the_Periodic_Points_to_be_Closed_for_a_Permutation_Operator}. Theorem  \ref{Periodic_Points_of_a_Permutation_Operator} (3), (4), and (5) follow from Proposition \ref{Criterion_for_the_Periodic_Points_to_be_the_whole_or_dense_for_a_Permutation_Operator}.
$\Box$
\\ Similarly as in the case of diagonal operators, we briefly discuss Theorem \ref{Periodic_Points_of_a_Permutation_Operator} in the finite-dimensional setting.

\begin{remark}
   {\rm As in the previous section, an analog of Theorem \ref{Periodic_Points_of_a_Permutation_Operator} can be formulated in the finite dimensional setting. Consider $\mathbb{C}^n$ with the standard orthonormal basis $\{e_1, ..., e_n\}.$ Let $\sigma: \{1,...,n\} \rightarrow \{1,...,n\}$ be a bijection. Define $T := T_{\sigma}: \mathbb{C}^n \rightarrow \mathbb{C}^n$ by $T_{\sigma}(e_k) = e_{\sigma(k)}$ for all $k = 1, ..., n.$ Then clearly $P(T) = P(T^*) = \mathbb{C}^n.$}
$\Box$
\end{remark}

\section{A Characterization of Unitary Diagonal Operators}
In this section, we prove Theorem \ref{Apprximation_by_Diagonal_Operators_with_allspace_as_Fixed_Points} and give some examples. Note that if $T \in \mathcal{B}(H)$ is a diagonal operator with $T(e_n) = \alpha_n e_n$ for all $n \in \mathbb{N}$ and $P(T) = H,$ then by Theorem \ref{Periodic_Points_of_a_Diagonal_Operator}(4), it follows that $|\alpha_n| = 1$ for all $n \in \mathbb{N},$ and so $T$ is unitary. Now by the fact that the set of all unitary diagonal operators on $H$ is closed in $\mathcal{B}(H)$ (with respect to the norm topology on $\mathcal{B}(H)$), it follows that the limit of a sequence $\{T_n\}_{n \in \mathbb{N}}$ (where $P(T_n) = H$ for all $n$) must be a unitary diagonal operator. Theorem \ref{Apprximation_by_Diagonal_Operators_with_allspace_as_Fixed_Points} shows that this property in fact characterizes the unitary diagonal operators. That is, every unitary diagonal operator can be approximated by a sequence $\{T_n\}_{n \in \mathbb{N}}$ of diagonal operators with $P(T_n) = H$ for all $n.$ The theorem also shows that the set of all diagonal operators $T$ with $P(T) = H$ is dense in the set of all unitary diagonal operators.

\begin{remark}
{\rm In general it is not true that a unitary operator $S \in \mathcal{B}(H)$ is diagonal with respect to some orthonormal basis. As an example, consider the case where $H = l^2(\mathbb{Z}).$ Take $S$ to be the bilateral shift operator: $S(\delta_k) = \delta_{k+1}$ for all $k \in \mathbb{Z}.$ (Here $\delta_k \in l^2(\mathbb{Z})$ is defined by $\delta_k(n) = 1$ if $n = k$ and $\delta(n) = 0$ if $n \neq k.$) The bilateral shift operator $S$ is unitary. However, $S$ has no eigenvectors, so $S$ is not diagonal with respect to any orthonormal basis.}
$\Box$
\end{remark}

{\bf Proof of Theorem \ref{Apprximation_by_Diagonal_Operators_with_allspace_as_Fixed_Points}:}

Firstly, we show that (2) implies (1). Assume that there exists a sequence $\{T_n\}_{n \in \mathbb{N}}$ of diagonal operators such that $P(T_n) = H$ for all $n \in \mathbb{N}$ and $\norm{T_n - T} \rightarrow 0$ as $n \rightarrow \infty.$ By Theorem \ref{Periodic_Points_of_a_Diagonal_Operator}(4), it follows that each $T_n$ is a unitary operator. By the fact that the set of all unitary operators is closed in $\mathcal{B}(H)$ (under the usual norm topology in $\mathcal{B}(H)$), it follows that $T$ is unitary, whence (1).

Conversely, we show that (1) implies (2). Let $T \in \mathcal{B}(H)$ be a unitary operator with $T(e_n) = \alpha_n e_n$ for all $n \in \mathbb{N}.$
%We construct $\{T_n\}_{n \in \mathbb{N}}$ as follows.
Take $E_n := \{e^{\frac{2 \pi i k}{2^n}}: k \in \mathbb{Z}\}$ for all $n \in \mathbb{N}.$ Hence, $E_n$ is the finite cyclic subgroup of $S^1$ generated by $e^{\frac{2 \pi i}{2^n}},$ and $E_n$ has cardinality $2^n.$
Let $\beta_{j,n} \in E_n$ be the element such that $|\beta_{j,n}-\alpha_j| = \min \{|z-\alpha_j|: z \in E_n\}.$
We construct $T_n$ by $T_n(e_j) = \beta_{j,n}e_j.$
Thus, each $T_n$ is a unitary diagonal operator with $T_n^{2^n} = I.$ Therefore, $P(T_n) = H$ for all $n \in \mathbb{N}.$ Moreover, $\norm{T_n-T} = \sup_{j \in \mathbb{N}}|\beta_{j,n}-\alpha_j| \leq \frac{2\pi}{2^n} \rightarrow 0$ as $n \rightarrow \infty.$ Therefore, (2) holds.
$\Box$

Note that one cannot give an analog of Theorem \ref{Apprximation_by_Diagonal_Operators_with_allspace_as_Fixed_Points} for permutation operators, as explained in the following remark.

\begin{remark}
    {\rm Assume that $\sigma: \mathbb{N} \rightarrow \mathbb{N}$ and $\tau:  \mathbb{N} \rightarrow \mathbb{N}$ are both bijections, and $\sigma \neq \tau.$ Hence $\sigma(i) \neq \tau(i)$ for some $i \in \mathbb{N}.$ It follows that $T_{\sigma}(e_i) = e_{\sigma(i)}$ is of distance $\sqrt{2}$ from $T_{\tau}(e_i) = e_{\tau(i)}.$ This implies that $\norm{T_{\sigma}-T_{\tau}} \geq \sqrt{2}$ whenever $\sigma$ and $\tau$ are distinct.
    Consequently, given an arbitrary bijection $\sigma$ on $\mathbb{N},$ in general one cannot find a sequence $\{\sigma_n\}_{n \in \mathbb{N}}$ of bijections on $\mathbb{N}$ with the property that $P(T_{\sigma_n}) = H$ for all $n \in \mathbb{N}$ and $\norm{T_{\sigma}-T_{\sigma_n}} \rightarrow 0$ as $n \rightarrow \infty.$}
$\Box$
\end{remark}
As an interesting corollary of the above Remark, we give another short proof of the following well-known result. Recall that for any $C^*-$algebra $A$ with identity, we denote by $U(A) := \{a \in A: a a^* = a^* a = 1\}$ the group of all unitary elements of $A.$

\begin{proposition}
(1) The unit sphere $\{T \in \mathcal{B}(H): \norm{T} = 1\}$ of $\mathcal{B}(H)$ is not separable under the operator norm topology.
\\(2) The unitary group $U(\mathcal{B}(H)) = \{T \in \mathcal{B}(H): T T^* = T^* T = I\}$ of $\mathcal{B}(H)$ is not-separable and noncompact under the operator norm topology of $\mathcal{B}(H)$.
\end{proposition}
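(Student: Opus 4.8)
The plan is to deduce both statements directly from the $\sqrt{2}$-separation of distinct permutation operators recorded in the Remark above, combined with two standard metric-space facts: a metric space containing an uncountable subset whose points are pairwise at distance at least a fixed $\delta>0$ is not separable, and a metric space containing an infinite such subset is not sequentially compact, hence (being metric) not compact.

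First I would produce an uncountable family of bijections of $\mathbb{N}$. Partition $\mathbb{N}$ into the consecutive pairs $B_k := \{2k-1,2k\}$, $k\in\mathbb{N}$, and for each subset $S\subseteq\mathbb{N}$ let $\sigma_S:\mathbb{N}\to\mathbb{N}$ be the bijection that transposes the two elements of $B_k$ when $k\in S$ and fixes $B_k$ pointwise when $k\notin S$. The assignment $S\mapsto\sigma_S$ is injective, so $\{\sigma_S: S\subseteq\mathbb{N}\}$ is an uncountable (indeed $2^{\aleph_0}$-sized) family of bijections of $\mathbb{N}$. If $S\neq S'$, say $k\in S\setminus S'$, then $T_{\sigma_S}(e_{2k-1})=e_{2k}\neq e_{2k-1}=T_{\sigma_{S'}}(e_{2k-1})$, so by the estimate in the Remark, $\|T_{\sigma_S}-T_{\sigma_{S'}}\|\geq\sqrt{2}$. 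Since each $T_\sigma$ is unitary with $\|T_\sigma\|=1$, the set $\mathcal{U}:=\{T_{\sigma_S}: S\subseteq\mathbb{N}\}$ is an uncountable subset of the unit sphere of $\mathcal{B}(H)$ that is contained in $U(\mathcal{B}(H))$ and whose distinct elements are at distance at least $\sqrt{2}$.

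Next I would invoke the metric-space facts. The open balls of radius $\sqrt{2}/2$ centered at the points of $\mathcal{U}$ are pairwise disjoint; any subset dense in the unit sphere (respectively in $U(\mathcal{B}(H))$) must meet each of these uncountably many balls, hence cannot be countable. This proves part (1) and the non-separability assertion of part (2). For noncompactness in (2), take any sequence of pairwise distinct elements of $\mathcal{U}$: any two of its terms are at distance $\geq\sqrt{2}$, so it has no Cauchy subsequence and therefore no subsequence converging in $\mathcal{B}(H)$, let alone in $U(\mathcal{B}(H))$; thus $U(\mathcal{B}(H))$ is not compact.

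I do not expect a real obstacle here, since the substance is already in the Remark. The only point demanding a little care is exhibiting a genuinely uncountable family of bijections (a countably infinite one would suffice for noncompactness but not for non-separability); the block-transposition construction above handles this cleanly and self-containedly, though one could instead just cite that the symmetric group on $\mathbb{N}$ has cardinality $2^{\aleph_0}$.
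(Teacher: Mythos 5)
Your proposal is correct and takes essentially the same route as the paper: both exhibit an uncountable family of permutation operators that is pairwise $\sqrt{2}$-separated in operator norm and conclude non-separability (and, for the unitary group, noncompactness). The only difference is cosmetic --- the paper simply cites the uncountability of the full symmetric group on $\mathbb{N}$, whereas you construct an explicit uncountable subfamily via block transpositions, which makes the cardinality claim self-contained.
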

\begin{proof}
(1) We take $S_{\mathbb{N}}$ to be the group of all bijections of the set $\mathbb{N}$ onto itself. Note that $S_{\mathbb{N}}$ is uncountable. Hence, $\{T_{\sigma}\}_{\sigma \in S_{\mathbb{N}}}$ is an uncountable subset of the unit sphere $\{T \in \mathcal{B}(H): \norm{T} = 1\}$ with $\norm{T_{\sigma}-T_{\tau}} \geq \sqrt{2}$ whenever $\sigma$ and $\tau$ are distinct bijections on $\mathbb{N}.$ This shows that $\{T \in \mathcal{B}(H): \norm{T} = 1\}$ is not separable.
\\(2) As in the proof of part (1), $\{T_{\sigma}\}_{\sigma \in S_{\mathbb{N}}}$ is an uncountable subset of the unitary group $U(\mathcal{B}(H)) = \{T \in \mathcal{B}(H): T T^* = T^* T = I\}$ of $\mathcal{B}(H)$ with $\norm{T_{\sigma}-T_{\tau}} \geq \sqrt{2}$ whenever $\sigma$ and $\tau$ are distinct. This shows that $U(\mathcal{B}(H)) = \{T \in \mathcal{B}(H): T T^* = T^* T = I\}$ is not separable, hence noncompact under the operator norm topology on $\mathcal{B}(H).$ We complete the proof.
\end{proof}

Recall that if we take $H$ to be $\mathbb{C}^n,$ then the unitary group $U(\mathcal{B}({\mathbb{C}^n}))$ is the familiar group $U(n,\mathbb{C})$ of all $n$ by $n$ unitary complex matrices, which is a compact Lie group. Consequently, for a separable Hilbert space $H,$ the unitary group $U(\mathcal{B}(H))$ is compact under the operator norm topology if and only if $H$ is finite-dimensional.

{}
\end{document}